\newtheorem{theorem}[equation]{Theorem}
\newtheorem{lemma}[equation]{Lemma}
\newtheorem{proposition}[equation]{Proposition}
\newtheorem{corollary}[equation]{Corollary}
\theoremstyle{definition}
\theoremstyle{remark}
\numberwithin{equation}{section}
\DeclareMathAlphabet{\matheur}{U}{eur}{m}{n}
\newcommand{\dotcup}{\ensuremath{\mathaccent\cdot\cup}}
\DeclareMathOperator{\Gal}{Gal}
\DeclareMathOperator{\Li}{Li}
\newcommand*\pFqskip{8mu}
\newcommand*\pFq{\begingroup
        \catcode`\,\active
        \def ,{\mskip\pFqskip\relax}%
        \dopFq
}
\def\dopFq#1#2#3#4#5{%
        {}_{#1}F_{#2}\biggl(\genfrac..{0pt}{}{#3}{#4};#5\biggr)%
        \endgroup
}
\renewcommand{\Im}{\operatorname{Im}}
\renewcommand{\Re}{\operatorname{Re}}
\begin{document}

\title[Three-variable Mahler measures]{Three-variable Mahler measures and special values of modular and Dirichlet $L$-series}

\author{Detchat Samart}
\address{Department of Mathematics, Texas A{\&}M University, College
Station,
TX 77843, USA} \email{detchats@math.tamu.edu}

\thanks{The auther's research was partially supported by NSF Grant DMS-0903838}

\subjclass[2010]{11R06, 11F67, 33C20}

\date{August 19, 2012}

\begin{abstract}
In this paper we prove that the Mahler measures of the Laurent polynomials $(x+x^{-1})(y+y^{-1})(z+z^{-1})+k^{1/2}$, $(x+x^{-1})^2(y+y^{-1})^2(1+z)^3z^{-2}-k$, and $x^4+y^4+z^4+1+k^{1/4}xyz$, for various values of $k$, are of the form $r_1 L'(f,0)+r_2 L'(\chi,-1)$, where $r_1,r_2\in \mathbb{Q}$, $f$ is a CM newform of weight $3$, and $\chi$ is a quadratic character. Since it has been proved that these Mahler measures can also be expressed in terms of logarithms and $_5F_4$-hypergeometric series, we obtain several new hypergeometric evaluations and transformations from these results.
\end{abstract}

\keywords{Mahler measures, Eisenstein-Kronecker series, Hecke $L$-series, CM newforms, $L$-functions, Hypergeometric series}

\maketitle
\section{Introduction}\label{Sec:Introduction}
The logarithmic Mahler measure of a Laurent 
polynomial $P\in \mathbb{C}[X_1^{\pm 1},\ldots,X_n^{\pm 1}]$ is defined by 
$$m(P)=\int_0^1\cdots \int_0^1 \log |P(e^{2\pi i \theta_1},\ldots,e^{2\pi i \theta_n})|\,d\theta_1\cdots d\theta_n.$$
It has been shown that Mahler measures of certain types of Laurent polynomials are related to special values of  $L$-functions. One of the first examples, proved by Smyth \cite{Smyth}, is 
\begin{equation*}
m(x+y+1)=\frac{3\sqrt{3}}{4\pi}L(\chi_{-3},2)=L'(\chi_{-3},-1),
\end{equation*}
where here and throughout $\chi_{D}(n)=\left(\frac{D}{n}\right).$ 

Boyd \cite{Boyd} has made substantial progress in this research area by showing that Mahler measures of a number of families of two-variable polynomials are numerically equal to rational multiples of $L'(E,0)$, where $E$ is the elliptic curve over $\mathbb{Q}$ given by the corresponding polynomial. These investigations resulted in a large number of potential conjectured formulas, and this significant discovery has motivated many others to find proofs of these formulas and explanations for this phenomenon. For instance, Rodriguez Villegas rigorously verified that such formulas hold for many tempered polynomials whose corresponding elliptic curves have complex multiplication by some number fields (for a definition of tempered polynomials see \cite[\S III]{RV}). More recently, Rogers and Zudilin \cite{RZ} have proved an early conjecture of Deninger \cite{Deninger} that 
\begin{equation*}
m(1+X+X^{-1}+Y+Y^{-1})=\frac{15}{4\pi^2}L(E_{15},2),
\end{equation*}
where $E_{15}$ is an elliptic curve of conductor $15$.\\
\indent Note that if $E$ is an elliptic curve over $\mathbb{Q}$, then by the celebrated modularity theorem $$L(E,s)=L(f,s)$$ for some newform $f$ of weight $2$. Therefore, it is interesting to look for examples of polynomials in more variables whose Mahler measures are related to $L$-values of higher dimensional varieties or those of modular forms of higher weights corresponding to those polynomials.

In the three-variable case, Bertin \cite{Bertin} proved that certain $P\in\mathbb{C}[X^{\pm 1},Y^{\pm 1},Z^{\pm 1}]$ have Mahler measures of the form
\begin{equation*}
m(P)=r\left(\frac{\sqrt{N}}{2\pi}\right)^3L(g,3),
\end{equation*}
where $r\in\mathbb{Q}$ and where $g$ is a Hecke newform of weight $3$ for $\Gamma_0(N)$. The zero locus of $P$ defines a singular $K3$ surface (having Picard number 20), and $L(g,s)$ appears as a factor in its $L$-series. This can be considered as an analogue of the two-variable case, where the modularity theorem for elliptic curves over $\mathbb{Q}$ is replaced by the modularity theorem for singular $K3$ surfaces, originally proved by Livn\'{e} \cite{Liv}. Rogers \cite{RogersMain} then extended these results by showing that Mahler measures of the polynomials given in \cite{Bertin} can be written as linear combinations of Mahler measures of some other polynomials which are of hypergeometric type. In other words, the latter Mahler measures, of a family of polynomials parametrized by $k$, are of the form 
\begin{equation*}
m(P_k)=\Re\left(\log(k)+\frac{r_1}{k}\pFq{p}{q}{a_1,a_2,\ldots,a_p}{b_1,b_2,\ldots,b_q}{\frac{r_2}{k}}\right),
\end{equation*}
where $r_1,r_2\in \mathbb{Q}$ and 
\begin{equation*}
\pFq{p}{q}{a_1,a_2,\ldots,a_p}{b_1,b_2,\ldots,b_q}{x}=\sum_{n=0}^{\infty}\frac{(a_1)_n\cdots(a_p)_n}{(b_1)_n\cdots(b_q)_n}\frac{x^n}{n!}
\end{equation*}
with $(c)_n=\Gamma(c+n)/\Gamma(c)$. Combining these results together he deduced interesting formulas relating hypergeometric values to special values of modular $L$-series, including
\begin{equation}\label{hyperI}
\pFq{5}{4}{\frac{5}{4},\frac{3}{2},\frac{7}{4},1,1}{2,2,2,2}{1}=\frac{256}{3}\log(2)-\frac{5120\sqrt{2}}{3\pi^3}L(f,3)
\end{equation}
and
\begin{equation}\label{hyperII}
\pFq{5}{4}{\frac{4}{3},\frac{3}{2},\frac{5}{3},1,1}{2,2,2,2}{1}=18\log(2)+27\log(3)-\frac{810\sqrt{3}}{\pi^3}L(g,3),
\end{equation}
where $f(\tau)=\eta(\tau)^2\eta(2\tau)\eta(4\tau)\eta(8\tau)^2$ and $g(\tau)=\eta(2\tau)^3\eta(6\tau)^3$ (see \cite[Cor.~2.6]{RogersMain}). (Here $\eta(\tau)$ is the Dedekind eta function.)

Following Rogers' notations, we denote 
\begin{align*}
f_2(k)&:=2m\left((x+x^{-1})(y+y^{-1})(z+z^{-1})+k^{1/2}\right),\\
f_3(k)&:=m\left((x+x^{-1})^2(y+y^{-1})^2(1+z)^3z^{-2}-k\right),\\
f_4(k)&:=4m\left(x^4+y^4+z^4+1+k^{1/4}xyz\right),
\end{align*}
with the parameter $k\in\mathbb{C}$. (Note that the Mahler measures $f_2(k)$ and $f_4(k)$ do not depend on the choices of the square root and the fourth root of $k$, respectively.) These Mahler measures are also known to be of hypergeometric type by the following result:  
\begin{proposition}\label{P:Rogers}[{Rogers \cite[Prop.~2.2]{RogersMain}}]
\begin{enumerate}
\item[(i)]
If $|k|\geq 64$, then 
$\displaystyle f_2(k)=\Re\left(\log(k)-\frac{8}{k}\pFq{5}{4}{\frac{3}{2},\frac{3}{2},\frac{3}{2},1,1}{2,2,2,2}{\frac{64}{k}}\right).
$
\item[(ii)]
If $|k|\geq 128$, then 
$\displaystyle
f_3(k)=\Re\left(\log(k)-\frac{12}{k}\pFq{5}{4}{\frac{4}{3},\frac{3}{2},\frac{5}{3},1,1}{2,2,2,2}{\frac{108}{k}}\right).
$
\item[(iii)]
If $|k|\geq 256$, then 
$\displaystyle
f_4(k)=\Re\left(\log(k)-\frac{24}{k}\pFq{5}{4}{\frac{5}{4},\frac{3}{2},\frac{7}{4},1,1}{2,2,2,2}{\frac{256}{k}}\right).
$
\end{enumerate}
\end{proposition}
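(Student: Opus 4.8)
The three parts follow from a single computation, which I sketch for part~(i). Parametrising $\TT^3$ by $x=e^{2\pi i\theta_1}$, $y=e^{2\pi i\theta_2}$, $z=e^{2\pi i\theta_3}$, the ``$k$-free'' part $R:=(x+x^{-1})(y+y^{-1})(z+z^{-1})$ of the polynomial defining $f_2$ equals $8\cos(2\pi\theta_1)\cos(2\pi\theta_2)\cos(2\pi\theta_3)$, which is real with $|R|\le 8$; similarly the $k$-free part of the polynomial defining $f_3$ has modulus $\le 128$ on $\TT^3$, and $|x^4+y^4+z^4+1|\le 4$ there. Hence, once $|k|$ exceeds the stated threshold ($64$, $128$, $256$ for $f_2,f_3,f_4$ respectively), the dominant monomial ($k^{1/2}$, $k$, and $k^{1/4}xyz$, respectively) strictly dominates the remaining terms everywhere on $\TT^3$, so the defining polynomial is zero-free on the torus, and vanishes only on a finite (hence null) subset when $|k|$ equals the threshold.

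First I would factor out the dominant monomial and expand the logarithm, using $\log|1+w|=\Re\sum_{n\ge1}\tfrac{(-1)^{n-1}}{n}w^{n}$ for $f_2$ and $f_4$ and $\log|1-w|=-\Re\sum_{n\ge1}\tfrac1n w^{n}$ for $f_3$, where $w$ is the $k$-free part divided by the dominant monomial (so $|w|\le1$ on $\TT^3$, with $|w|<1$ uniformly once $|k|$ is strictly above the threshold, legitimising term-by-term integration; the boundary case then follows from continuity of the Mahler measure in $k$, the relevant ${}_5F_4$ being absolutely convergent at an argument of modulus $1$). Each integral $\int_{\TT^3}w^{n}$ is a constant term of a Laurent polynomial: only even $n=2m$ contributes for $f_2$, giving $\binom{2m}{m}^{3}$; every $n$ contributes for $f_3$, giving $\binom{2n}{n}^{2}\binom{3n}{n}$ (the last factor being the coefficient of $z^{2n}$ in $(1+z)^{3n}$); and only $n=4m$ contributes for $f_4$, giving $(4m)!/(m!)^{4}$. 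Accounting for the signs and for the normalising factors $2,1,4$ in the definitions (which cancel the reciprocals $\tfrac12,1,\tfrac14$ coming from the logarithm of the dominant monomial), one obtains, for $|k|$ large,
\begin{align*}
f_2(k)&=\Re\Bigl(\log k-\sum_{m\ge1}\tfrac1m\binom{2m}{m}^{3}k^{-m}\Bigr),\\
f_3(k)&=\Re\Bigl(\log k-\sum_{n\ge1}\tfrac1n\binom{2n}{n}^{2}\binom{3n}{n}\,k^{-n}\Bigr),\\
f_4(k)&=\Re\Bigl(\log k-\sum_{m\ge1}\tfrac1m\frac{(4m)!}{(m!)^{4}}\,k^{-m}\Bigr).
\end{align*}

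It remains to recognise each of these series as the asserted hypergeometric function. Using the multiplication formulas $(2n)!=4^{n}\,n!\,(1/2)_n$, $(3n)!=27^{n}\,n!\,(1/3)_n(2/3)_n$, $(4n)!=256^{n}\,n!\,(1/4)_n(1/2)_n(3/4)_n$ (equivalently $\binom{2n}{n}=4^{n}(1/2)_n/n!$), each coefficient becomes a ratio of Pochhammer symbols times a power of $64$, $108$ or $256$. Reindexing $n\mapsto n+1$ and applying $(1/2)_{n+1}=\tfrac12(3/2)_n$, $(1/3)_{n+1}=\tfrac13(4/3)_n$, $(2/3)_{n+1}=\tfrac23(5/3)_n$, $(1/4)_{n+1}=\tfrac14(5/4)_n$, $(3/4)_{n+1}=\tfrac34(7/4)_n$, together with $(1)_n=n!$ and $(2)_n=(n+1)!$, transforms each sum into $\tfrac{c}{k}\cdot{}_5F_4(\cdots;x)$ with exactly the claimed upper parameters, argument $x=64/k$, $108/k$, $256/k$, and prefactor $c=8\;(=64/8)$, $c=12\;(=108/9)$, $c=24\;(=256\cdot\tfrac{3}{32})$, respectively. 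The only points that demand genuine care are the justification of term-by-term integration up to the boundary of the region $|k|\ge$ threshold, and the bookkeeping of the numerical constants in this last identification; both are routine.
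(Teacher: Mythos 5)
Your argument is correct, and the numerical bookkeeping checks out: the constant terms $\binom{2m}{m}^3$, $\binom{2n}{n}^2\binom{3n}{n}=(2n)!(3n)!/(n!)^5$ and $(4m)!/(m!)^4$, the Pochhammer reductions via the multiplication formula, and the prefactors $8=64/8$, $12=108/9$, $24=256\cdot\tfrac{3}{32}$ all come out as claimed. Note, however, that the paper itself offers no proof of this proposition --- it is quoted verbatim from Rogers \cite[Prop.~2.2]{RogersMain} --- and your derivation (factor out the dominant monomial, expand $\log|1+w|$, integrate term by term, identify constant terms of Laurent powers) is essentially Rogers' own argument, which in turn is the standard Rodriguez Villegas--style expansion for Mahler measures of hypergeometric type. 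The only point genuinely requiring care is the boundary $|k|$ equal to the threshold, where $|w|=1$ on a null set; your appeal to continuity of the Mahler measure in the coefficients (Boyd) combined with absolute convergence of the ${}_5F_4$ at argument of modulus $1$ (the parametric excess is $3/2>0$ in all three cases, and indeed $\binom{2m}{m}^3 64^{-m}\sim (\pi m)^{-3/2}$) closes that gap adequately.
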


The ultimate goal of this paper is to prove:
\begin{theorem}\label{T:Main} 
The following equalities hold:
\begin{align} 
f_2(64)&=\frac{128}{\pi^3}L(h,3)=8L'(h,0), \label{A:64}\\
f_2(256)&=\frac{64\sqrt{3}}{\pi^3}L(g_{48},3)+\frac{16}{3\pi}L(\chi_{-4},2)=\frac{4}{3}(L'(g_{48},0)+2L'(\chi_{-4},-1)), \label{A:256}\\
f_3(216)&=\frac{45\sqrt{6}}{\pi^3}L(g_{24}^{(1)},3)+\frac{45\sqrt{3}}{16\pi}L(\chi_{-3},2)=\frac{15}{4}(L'(g_{24}^{(1)},0)+L'(\chi_{-3},-1)), \label{A:216}\\
f_3(1458)&=\frac{405\sqrt{3}}{4\pi^3}L(g,3)+\frac{15}{2\pi}L(\chi_{-4},2)=\frac{15}{8}(9L'(g,0)+2L'(\chi_{-4},-1)), \label{A:1458}\\
f_4(648)&=\frac{160}{\pi^3}L(h,3)+\frac{5}{\pi}L(\chi_{-4},2)=\frac{5}{2}(4L'(h,0)+L'(\chi_{-4},-1)),\label{A:648}\\
f_4(2304)&=\frac{80\sqrt{6}}{\pi^3}L(g_{24}^{(2)},3)+\frac{5\sqrt{3}}{\pi}L(\chi_{-3},2)=\frac{20}{3}(L'(g_{24}^{(2)},0)+L'(\chi_{-3},-1)),\label{A:2304}\\
f_4(20736)&=\frac{80\sqrt{10}}{\pi^3}L(g_{40},3)+\frac{32\sqrt{2}}{5\pi}L(\chi_{-8},2)=\frac{4}{5}(5L'(g_{40},0)+2L'(\chi_{-8},-1)),\label{A:20736}\\
f_4(614656)&=\frac{800\sqrt{2}}{3\pi^3}L(f,3)+\frac{10\sqrt{3}}{\pi}L(\chi_{-3},2)=\frac{40}{3}(5L'(f,0)+L'(\chi_{-3},-1)),\label{A:614656}\end{align}
where 
\begin{align*}
f(\tau)&=\eta(\tau)^2\eta(2\tau)\eta(4\tau)\eta(8\tau)^2,\\
g(\tau)&=\eta(2\tau)^3\eta(6\tau)^3,\\
h(\tau)&=\eta(4\tau)^6,\\ g_{48}(\tau)&=\frac{\eta(4\tau)^9\eta(12\tau)^9}{\eta(2\tau)^3\eta(6\tau)^3\eta(8\tau)^3\eta(24\tau)^3},\\ g_{24}^{(1)}(\tau)&=q+2q^2-3q^3+4q^4-2q^5-6q^6-10q^7+8q^8+9q^9-4q^{10}+\cdots,\\ g_{24}^{(2)}(\tau)&=q-2q^2+3q^3+4q^4+2q^5-6q^6-10q^7-8q^8+9q^9-4q^{10}-\cdots,\\
g_{40}(\tau)&=q-2q^2+4q^4+5q^5+6q^7-8q^8+9q^9-10q^{10}-18q^{11}-6q^{13}-\cdots.
\end{align*}
\end{theorem}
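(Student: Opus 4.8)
The plan is to reduce each of the eight evaluations to a statement about the Mahler measure of a specific singular $K3$ surface, then identify the associated modular form and its $L$-value. Concretely, Rogers \cite{RogersMain} has already shown that each $f_i(k)$ for the distinguished values of $k$ in the theorem is a rational linear combination of the Bertin-type Mahler measures $m(P)$ studied in \cite{Bertin}, where $P$ cuts out a singular $K3$ surface. So the first step is to invoke these linear relations explicitly: for each of the eight lines, write $f_i(k)$ as $\sum_j c_j\, m(P_j)$ with $c_j\in\QQ$ and $P_j$ among Bertin's polynomials, keeping careful track of the hypergeometric parameters in Proposition \ref{P:Rogers} so that the correct specialization of $k$ is used.

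Next I would compute each $m(P_j)$ in terms of an Eisenstein--Kronecker series attached to the transcendental lattice of the corresponding $K3$ surface. The standard technique here, going back to Bertin and Deninger, is to express $m(P)$ via a regulator integral against a differential form, and then to evaluate that integral as a period of an Eisenstein series of weight $3$ on the relevant congruence subgroup $\Gamma_0(N)$. Since the surface is singular (Picard number $20$), its transcendental part is a rank-$2$ Hodge structure of CM type, so by Livn\'e's modularity theorem \cite{Liv} the associated weight-$3$ form is a CM newform; one then matches this newform with the explicit eta-products or $q$-expansions $f,g,h,g_{48},g_{24}^{(1)},g_{24}^{(2)},g_{40}$ listed in the statement by comparing a few Fourier coefficients (equivalently, by identifying the CM field and the Hecke character). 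The Dirichlet $L$-value contributions $L(\chi_D,2)$ arise from the algebraic (Eisenstein) part of the regulator, i.e. from the rational divisor classes, and are read off the same computation. Finally, convert $L(\text{newform},3)$ and $L(\chi_D,2)$ into the derivative-at-center form $L'(\cdot,0)$ and $L'(\cdot,-1)$ using the functional equations for weight-$3$ CM forms and for odd quadratic characters, yielding the second equality in each line.

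The main obstacle I anticipate is the precise evaluation of the regulator integrals and the bookkeeping of the rational constants $r_1,r_2$. Establishing that $m(P)$ equals a specific rational multiple of $(\sqrt N/2\pi)^3 L(g,3)$ — rather than merely being a period of the right weight-$3$ Eisenstein space — requires pinning down the exact cohomology class, the normalization of the newform, and the contribution of the boundary/torsion terms; small errors in any of these propagate into the rational coefficients and would make the claimed equalities fail. A secondary difficulty is verifying that the newforms $g_{24}^{(1)}$ and $g_{24}^{(2)}$ (given only by their $q$-expansions) are genuinely distinct CM forms of level $24$ attached to the correct surfaces; this amounts to a careful comparison of Hecke eigenvalues at several small primes and identification of the underlying imaginary quadratic CM field in each case. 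Once the regulator computations are in place and the modular forms correctly identified, the passage through functional equations to obtain the $L'$-forms is routine, as is deriving the new hypergeometric identities by combining Theorem \ref{T:Main} with Proposition \ref{P:Rogers}.
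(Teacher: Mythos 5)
Your first step — writing each $f_i(k)$ as a rational combination of Bertin's $K3$ Mahler measures $m(Q_j)$ — is not available in the literature you cite, and this is a genuine gap. Rogers' relation (Theorem~\ref{T:RogersII} here) connects $m(Q_{z-4})$ only with $f_3$, not with $f_2$ or $f_4$, so for six of the eight evaluations there is no known reduction to Bertin-type polynomials at all. Even for $f_3(216)$ and $f_3(1458)$, the values of $m(Q_k)$ that Bertin actually proved ($m(Q_0)$, $m(Q_{12})$, and the linear relation $2m(Q_{-36})=4m(Q_{-6})+m(Q_0)$) do not suffice to solve for the $f_3$ values; indeed the logical dependency in the paper runs the other way — \eqref{A:1458} is used in Section~\ref{sec:other} to \emph{derive} $m(Q_{-36})$ and $m(Q_{-6})$, which would be circular under your scheme. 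The second gap is that the regulator/period computation you defer to is exactly where all the content lies: you acknowledge that pinning down the rational constants, the correct newform normalization, and the "algebraic part" producing $L(\chi_D,2)$ is the hard step, but you give no mechanism for doing it, whereas these constants are precisely what the theorem asserts.

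The paper's route avoids both problems by staying entirely analytic. It starts from Rogers' identities $f_i(s_i(q))=\sum_j c_j G(q^{a_j})$ with $G(q)=\Re(-\log q+240\sum n^2\log(1-q^n))$, converts these by Fourier analysis (Bertin's method) into explicit Eisenstein--Kronecker lattice sums (Proposition~\ref{P:general}), evaluates the modular functions $s_i$ at CM points via Weber's tables (Lemma~\ref{L:s2}), and then identifies the two pieces of the lattice sum separately: the terms with numerator $m^2-cn^2$ match theta-series/Hecke-character expansions of the CM newforms (Lemmas~\ref{L:CM}--\ref{L:Finale}), while the residual Epstein-zeta differences factor as $\zeta$ or Dirichlet $L$-products via the Glasser--Zucker lattice-sum formulas and Dirichlet's representation-number formula (Lemma~\ref{L:FinaleII}). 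No $K3$ geometry, regulators, or Livn\'e's modularity theorem are needed. If you want to pursue your geometric framing, you would first have to establish analogues of Bertin's evaluations for the specific surfaces attached to $f_2$ and $f_4$ — which would amount to redoing the paper's computation in different language — so I would recommend adopting the direct Eisenstein--Kronecker approach.
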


We see from \cite{DKM} that $f, g,$ and $h$ defined above are newforms with complex multiplication (CM); i.e., the newforms are the inverse Mellin transforms of Hecke $L$-series, in $S_3(\Gamma_0(8),\chi_{-8}), S_3(\Gamma_0(12),\chi_{-3})$, and $S_3(\Gamma_0(16),\chi_{-4})$, respectively. Also, we will see in the next section that $g_{24}^{(1)},g_{24}^{(2)}\in S_3(\Gamma_0(24),\chi_{-24})$ and  $g_{40}\in S_3(\Gamma_0(40),\chi_{-40})$. Moreover, they all are newforms of CM type. On the other hand, it follows immediately by \cite[Thm.~1.64]{Ono} that $g_{48} \in S_3(\Gamma_0(48),\chi_{-3})$. Computing some first Fourier coefficients yields
\begin{align*}
g_{48}(\tau)&=q+3q^3-2q^7+9q^9-22q^{13}-26q^{19}-6q^{21}+\cdots, \\
g(\tau)&=q-3q^3+2q^7+9q^9-22q^{13}+26q^{19}-6q^{21}+\cdots;
\end{align*} that is, $g_{48}$ is a twist of $g$ by $\chi_{-4},$ so $g_{48}$ is also a CM newform. It might be worth pointing out that although $g_{24}^{(1)}$ and $g_{24}^{(2)}$ cannot be represented by an eta quotient, we can write them as linear combinations of eta quotients which form a basis for $S_3(\Gamma_0(24),\chi_{-24})$. However, this fact will not be used to prove \eqref{A:216} and \eqref{A:2304}.
Applying Proposition~\ref{P:Rogers} together with Theorem~\ref{T:Main} one can easily deduce many formulas similar to \eqref{hyperI} and \eqref{hyperII}.
\begin{corollary}\label{C:Hyper}
Let $f,g,h,g_{48},g_{24}^{(1)},g_{24}^{(2)},$ and $g_{40}$ be as defined in Theorem~\ref{T:Main}. Then the following formulas hold:
\begin{align*}
\pFq{5}{4}{\frac{3}{2},\frac{3}{2},\frac{3}{2},1,1}{2,2,2,2}{1}&=48\log(2)-64L'(h,0),\\
\pFq{5}{4}{\frac{3}{2},\frac{3}{2},\frac{3}{2},1,1}{2,2,2,2}{\frac{1}{4}}&=256\log(2)-\frac{128}{3}\left(L'(g_{48},0)+2L'(\chi_{-4},-1)\right),\\
\pFq{5}{4}{\frac{4}{3},\frac{3}{2},\frac{5}{3},1,1}{2,2,2,2}{\frac{1}{2}}&=54\log(6)-\frac{135}{2}\left(L'(g_{24}^{(1)},0)+L'(\chi_{-3},-1)\right),\\
\pFq{5}{4}{\frac{4}{3},\frac{3}{2},\frac{5}{3},1,1}{2,2,2,2}{\frac{2}{27}}&=\frac{243}{2}\log(2)+729\log(3)-\frac{3645}{16}\left(9L'(g,0)+2L'(\chi_{-4},-1)\right),\\
\pFq{5}{4}{\frac{5}{4},\frac{3}{2},\frac{7}{4},1,1}{2,2,2,2}{\frac{32}{81}}&=81\log(2)+108\log(3)-\frac{135}{2}\left(4L'(h,0)+L'(\chi_{-4},-1)\right),\\
\pFq{5}{4}{\frac{5}{4},\frac{3}{2},\frac{7}{4},1,1}{2,2,2,2}{\frac{1}{9}}&=768\log(2)+192\log(3)-640\left(L'(g_{24}^{(2)},0)+L'(\chi_{-3},-1)\right),\\
\pFq{5}{4}{\frac{5}{4},\frac{3}{2},\frac{7}{4},1,1}{2,2,2,2}{\frac{1}{81}}&=6912\log(2)+3456\log(3)-\frac{3456}{5}\left(5L'(g_{40},0)+2L'(\chi_{-8},-1)\right),\\
\pFq{5}{4}{\frac{5}{4},\frac{3}{2},\frac{7}{4},1,1}{2,2,2,2}{\frac{1}{2401}}&=\frac{614656}{3}\log(2)+\frac{307328}{3}\log(7)\\&\qquad-\frac{3073280}{9}\left(5L'(f,0)+L'(\chi_{-3},-1)\right).
\end{align*}
\end{corollary}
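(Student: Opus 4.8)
The plan is to derive each of the eight evaluations by comparing, for the relevant value of $k$, the hypergeometric formula for the Mahler measure given by Proposition~\ref{P:Rogers} with the closed form in terms of $L$-values established in Theorem~\ref{T:Main}, and then solving the resulting linear equation for the $_5F_4$-series.

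To illustrate, consider the first identity. Substituting $k=64$ into Proposition~\ref{P:Rogers}(i) gives
\[
f_2(64)=\Re\!\left(\log 64-\tfrac{1}{8}\,\pFq{5}{4}{\frac{3}{2},\frac{3}{2},\frac{3}{2},1,1}{2,2,2,2}{1}\right).
\]
Since the sum of the lower parameters minus the sum of the upper parameters is $8-\tfrac{13}{2}=\tfrac{3}{2}>0$, the series converges at the argument $1$, and because all of its terms are positive reals the operator $\Re$ is redundant. On the other hand \eqref{A:64} gives $f_2(64)=8L'(h,0)$. Equating, using $\log 64=6\log 2$, and rearranging yields $\pFq{5}{4}{\frac{3}{2},\frac{3}{2},\frac{3}{2},1,1}{2,2,2,2}{1}=48\log 2-64L'(h,0)$, as claimed.

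The other seven evaluations follow by the identical recipe. For $f_2(256)$ one uses part~(i) of Proposition~\ref{P:Rogers}, whose hypergeometric argument $64/256$ equals $\tfrac{1}{4}$, together with \eqref{A:256}; for $f_3(216)$ and $f_3(1458)$ one uses part~(ii), with arguments $108/216=\tfrac{1}{2}$ and $108/1458=\tfrac{2}{27}$, together with \eqref{A:216} and \eqref{A:1458}; and for $f_4(648)$, $f_4(2304)$, $f_4(20736)$, $f_4(614656)$ one uses part~(iii), with arguments $256/648=\tfrac{32}{81}$, $256/2304=\tfrac{1}{9}$, $256/20736=\tfrac{1}{81}$ and $256/614656=\tfrac{1}{2401}$, together with \eqref{A:648}, \eqref{A:2304}, \eqref{A:20736} and \eqref{A:614656}. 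In each of these seven cases the hypergeometric argument lies strictly inside $(0,1)$, so convergence and reality are immediate; the coefficient $-8/k$, $-12/k$ or $-24/k$ in front of the series is an explicit rational number; $\log k$ is an explicit $\ZZ$-linear combination of $\log 2$, $\log 3$ and $\log 7$; and one simply isolates the series.

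Thus there is no genuine obstacle in this corollary: the substance of the statement is already contained in Theorem~\ref{T:Main}, and the only minor point to check is the legitimacy of dropping $\Re$ and of evaluating the series at the boundary argument $1$ in the case $k=64$, which is handled by the parameter count above (equivalently, by Abel's theorem, since $f_2(k)$ is continuous in $k$). Assembling the eight computations completes the proof.
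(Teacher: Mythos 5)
Your proposal is correct and is exactly the argument the paper intends: the corollary is stated as an immediate consequence of combining Proposition~\ref{P:Rogers} (at the specific $k$ in each case) with the corresponding formula of Theorem~\ref{T:Main}, and your arithmetic in isolating each $_5F_4$-series checks out. Your extra remark on convergence of the series at the boundary argument $1$ for $k=64$ (via $\sum b_i-\sum a_i=\tfrac{3}{2}>0$) is a worthwhile detail the paper leaves implicit.
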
 
Furthermore, the following hypergeometric transformations are immediate consequences of \eqref{hyperI}, \eqref{hyperII}, and Corollary~\ref{C:Hyper}:
\begin{align*}
\pFq{5}{4}{\frac{5}{4},\frac{3}{2},\frac{7}{4},1,1}{2,2,2,2}{1}&=\frac{3}{12005}\pFq{5}{4}{\frac{5}{4},\frac{3}{2},\frac{7}{4},1,1}{2,2,2,2}{\frac{1}{2401}}+\frac{512}{15}\log(2)-\frac{128}{5}\log(7)\\&\qquad+\frac{256}{3}L'(\chi_{-3},-1),\\
\pFq{5}{4}{\frac{4}{3},\frac{3}{2},\frac{5}{3},1,1}{2,2,2,2}{1}&=\frac{16}{243}\pFq{5}{4}{\frac{4}{3},\frac{3}{2},\frac{5}{3},1,1}{2,2,2,2}{\frac{2}{27}}+10\log(2)-21\log(3)+30L'(\chi_{-4},-1),\\
\pFq{5}{4}{\frac{3}{2},\frac{3}{2},\frac{3}{2},1,1}{2,2,2,2}{1}&=\frac{32}{135}\pFq{5}{4}{\frac{5}{4},\frac{3}{2},\frac{7}{4},1,1}{2,2,2,2}{\frac{32}{81}}+\frac{144}{5}\log(2)-\frac{128}{5}\log(3)+16L'(\chi_{-4},-1).
\end{align*}
Note that $L'(\chi_{-4},-1)=\frac{2}{\pi}G$, where $G$ is the \textit{Catalan's constant}. Therefore, we also obtain new representations of $G$ in terms of $_5F_4$-hypergeometric series.

\section{Proof of The Main Theorem} \label{sec:main}
Throughout this paper, $q$ will be a function of $\tau\in \mathbb{C}$ with $\Im(\tau)>0$ given by $q:=q(\tau)=e^{2\pi i \tau}$, and we let $\displaystyle\sideset{}{'}\sum_{m,n\in \mathbb{Z}}$ denote the summation over $m,n\in\mathbb{Z}$ with $(m,n)\neq (0,0).$ As usual, we denote $$\eta(\tau):=q^{\frac{1}{24}}\prod_{n=1}^{\infty}(1-q^n) \quad\text{ and }\quad \Delta(\tau):=\eta(\tau)^{24}.$$ To prove Theorem~\ref{T:Main} we first prove a more general result stating that $f_2(k),f_3(k),$ and $f_4(k)$, for some values of $k$, can be expressed as Eisenstein-Kronecker series. 
\begin{proposition} \label{P:general}
Assume that $q\in (0,1)$, and let  
\begin{align*} s_2(q)&=-\frac{\Delta\left(\tau+\frac{1}{2}\right)}{\Delta(2\tau+1)},\\
s_3(q)&=\left(27\left(\frac{\eta(3\tau)}{\eta(\tau)}\right)^6+\left(\frac{\eta(\tau)}{\eta(3\tau)}\right)^6\right)^2,\\
s_4(q)&=\frac{\Delta(2\tau)}{\Delta(\tau)}\left(16\left(\frac{\eta(\tau)\eta(4\tau)^2}{\eta(2\tau)^3}\right)^4+\left(\frac{\eta(2\tau)^3}{\eta(\tau)\eta(4\tau)^2}\right)^4\right)^4.
\end{align*}
\begin{enumerate}
\item[(i)]
If $\displaystyle\Im(\tau)\geq \frac{1}{2}$, then
\begin{multline*}
f_2(s_2(q))=\frac{2\Im(\tau)}{\pi^3}\sideset{}{'}\sum_{m,n\in \mathbb{Z}}\biggl(-\left(\frac{4(m\Re(\tau)+n)^2}{[(m\tau+n)(m\bar{\tau}+n)]^3}-\frac{1}{[(m\tau+n)(m\bar{\tau}+n)]^2}\right)\\
+16\left(\frac{4(4m\Re(\tau)+n)^2}{[(4m\tau+n)(4m\bar{\tau}+n)]^3}-\frac{1}{[(4m\tau+n)(4m\bar{\tau}+n)]^2}\right)\biggr).
\end{multline*}

\item[(ii)]
If $\displaystyle\Im(\tau)\geq \frac{1}{\sqrt{3}}$, then
\begin{multline*}
f_3(s_3(q))=\frac{15\Im(\tau)}{4\pi^3}\sideset{}{'}\sum_{m,n\in \mathbb{Z}}\biggl(-\left(\frac{4(m\Re(\tau)+n)^2}{[(m\tau+n)(m\bar{\tau}+n)]^3}-\frac{1}{[(m\tau+n)(m\bar{\tau}+n)]^2}\right)\\
+9\left(\frac{4(3m\Re(\tau)+n)^2}{[(3m\tau+n)(3m\bar{\tau}+n)]^3}-\frac{1}{[(3m\tau+n)(3m\bar{\tau}+n)]^2}\right)\biggr).
\end{multline*}

\item[(iii)]
If $\displaystyle\Im(\tau)\geq \frac{1}{\sqrt{2}}$, then
\begin{multline*}
f_4(s_4(q))=\frac{10\Im(\tau)}{\pi^3}\sideset{}{'}\sum_{m,n\in \mathbb{Z}}\biggl(-\left(\frac{4(m\Re(\tau)+n)^2}{[(m\tau+n)(m\bar{\tau}+n)]^3}-\frac{1}{[(m\tau+n)(m\bar{\tau}+n)]^2}\right)\\
+4\left(\frac{4(2m\Re(\tau)+n)^2}{[(2m\tau+n)(2m\bar{\tau}+n)]^3}-\frac{1}{[(2m\tau+n)(2m\bar{\tau}+n)]^2}\right)\biggr).
\end{multline*}
\end{enumerate}
\end{proposition}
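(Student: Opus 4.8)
The plan is to establish each of the three identities by first reducing the relevant Mahler measure to a one-dimensional integral, then expressing that integral as a Kronecker--Eisenstein double series. Starting from Proposition~\ref{P:Rogers}, each $f_j(k)$ is written as $\Re\bigl(\log k - \tfrac{c_j}{k}\,{}_5F_4(\cdots;\tfrac{N_j}{k})\bigr)$, valid for $|k|$ large; the first step is to recognize that $c_j$ and the ${}_5F_4$ evaluated at $x = N_j/k$ combine to an expression of the form $\Re\bigl(\log k + \sum_{n\geq 1} a_n x^n/n\bigr)$ that is precisely the logarithmic Mahler measure of a rational elliptic-surface-type family. Concretely, I would introduce the hauptmodul-type parameter $k = k(q)$ given by the eta quotients $s_2, s_3, s_4$ and verify, by comparing $q$-expansions of both sides (using the ${}_5F_4$ series on one side and the eta-quotient on the other), that $N_j/k(q)$ is the appropriate modular function with a simple zero at $q = 0$, so that the hypergeometric series converges in the stated range of $\Im(\tau)$; the bounds $\Im(\tau)\geq \tfrac12, \tfrac1{\sqrt3}, \tfrac1{\sqrt2}$ come from forcing $|N_j/k| \leq 1$ (equivalently $|q|$ small enough), and this should be checked numerically/analytically at the boundary.

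The heart of the argument is to differentiate the hypergeometric identity with respect to $\tau$ (or rather with respect to $q$) and identify the resulting Eisenstein series. The key classical fact is that ${}_3F_2$- or ${}_4F_3$-type series at these arguments are periods of the family, and their $q$-derivatives are weight-$4$ (or weight-$3$) Eisenstein series for the relevant congruence subgroup; after one integration one recovers a combination of the Eisenstein--Kronecker sums
\[
E_2^{*}(N\tau) := \sideset{}{'}\sum_{m,n\in\ZZ}\left(\frac{4(Nm\Re(\tau)+n)^2}{[(Nm\tau+n)(Nm\bar\tau+n)]^3}-\frac{1}{[(Nm\tau+n)(Nm\bar\tau+n)]^2}\right),
\]
for $N = 1$ and $N = 2, 3, 4$ respectively. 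I would invoke the standard fact (in the normalization of Bertin's and Rogers' papers) that the real-analytic Eisenstein series $\Im(\tau)^{}\cdot E_2^{*}(N\tau)$ of weight $3$ has a Fourier expansion whose holomorphic part matches, term by term, the $q$-series produced by the ${}_5F_4$, while its non-holomorphic correction is what supplies the extra $\Im(\tau)$ prefactor and makes the equality exact (rather than merely asymptotic) for all $\tau$ in the fundamental-domain range.

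So the skeleton is: (1) rewrite $f_j(s_j(q))$ via Proposition~\ref{P:Rogers} as $\Re\bigl(\log s_j(q) + (\text{power series in } N_j/s_j(q))\bigr)$; (2) identify $N_j/s_j(q)$ as an explicit modular function and confirm convergence in the asserted region; (3) recognize the power series as (the holomorphic part of) a weight-$3$ real-analytic Eisenstein series for $\Gamma_0(N_j)$ built from $E_2^{*}(\tau)$ and $E_2^{*}(N_j\tau)$, matching leading coefficients to pin down the rational multiple $\tfrac{2\Im(\tau)}{\pi^3}, \tfrac{15\Im(\tau)}{4\pi^3}, \tfrac{10\Im(\tau)}{\pi^3}$; (4) conclude the two sides agree as real-analytic functions because they agree as $q$-series and transform the same way. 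The main obstacle I expect is step (3): correctly bookkeeping the non-holomorphic $E_2^{*}$ completion and the precise linear combination of oldforms at level $N_j$, since the naive holomorphic weight-$3$ Eisenstein space is one-dimensional but the $E_2^{*}(\tau) - N_j E_2^{*}(N_j\tau)$ combination that actually appears is not modular of weight $2$ in the usual sense — it is the quasi-modular $E_2$ that becomes modular only after the $\Im(\tau)^{-1}$ correction, and one must track how Rogers' logarithmic integral over the family produces exactly this corrected object. A secondary technical point is justifying term-by-term integration and the interchange of $\Re(\cdot)$ with the summation, which requires the absolute convergence guaranteed by the strict inequality forms of the bounds on $\Im(\tau)$ and a limiting argument at the boundary values $\Im(\tau) = \tfrac12, \tfrac1{\sqrt3}, \tfrac1{\sqrt2}$.
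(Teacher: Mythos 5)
Your overall skeleton (hypergeometric/Mahler-measure representation $\to$ Eisenstein $q$-series $\to$ Kronecker--Eisenstein lattice sum) points in the same direction as the paper, but the two load-bearing steps are not actually supplied, and the methods you propose for them would not close.

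First, the passage from the ${}_5F_4$ representation to an explicit Eisenstein $q$-series cannot be obtained by ``comparing $q$-expansions of both sides'': the identity to be established is between a power series in the modular function $N_j/s_j(q)$ and a weight-$4$ Eisenstein combination, and agreement of finitely many coefficients proves nothing without a Sturm-type bound for a specific space of holomorphic modular forms, which you never set up. The paper sidesteps this entirely by quoting Theorem~2.3 of \cite{RogersMain} (not Proposition~\ref{P:Rogers}), which already states $f_2(s_2(q))=-\tfrac{2}{15}G(q)-\tfrac{1}{15}G(-q)+\tfrac{3}{5}G(q^2)$ with $G(q)=\Re\bigl(-\log q+240\sum_{n\ge1}n^2\log(1-q^n)\bigr)$, together with $G(-q)=9G(q^2)-4G(q^4)-G(q)$; the only thing left to check is that $\Im(\tau)\ge\tfrac12$ forces $|q|$ into the region where Rogers' identity is valid (via $|s_2(q)|\ge s_2(e^{-\pi})=64$ from Lemma~\ref{L:s2}), which is not the same as your proposed condition $|N_j/k|\le1$.

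Second, the conversion of the resulting holomorphic series $\sum_{n\ge1}\sigma_3(n)q^{jn}/n$, i.e.\ $D^2\sum_{d\ge1}\Li_3(q^{jd})$, into the non-holomorphic double sum is the actual content of the proposition, and ``invoking the standard fact'' about Fourier expansions of real-analytic Eisenstein series is not a proof of it. The paper executes Bertin's method: introduce $F_j(\xi)=\sum_{d\ge1}\Li_3(q^{jd+\xi})$, verify differentiability at $\xi=0$ by the Weierstrass $M$-test, compute the Fourier coefficients $\hat{F_j}(n)$ explicitly (a Lipschitz-summation computation producing $-\tfrac{1}{2\pi i}\sum_{m\ge1} 1/(m^3(jm\tau-\tfrac{n}{4}))$ supported on $4\mid n$), apply pointwise convergence of the Fourier series at $\xi=0$, and then let $D^2=-\tfrac{1}{4\pi^2}\,d^2/d\tau^2$ act and take real parts to produce exactly the quadratic-numerator lattice sums. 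Your worry about the quasi-modularity of $E_2$ is a red herring --- after differentiating the Mahler measure, the holomorphic object is a combination of $E_4(q^j)$'s, honestly modular of weight~$4$ --- whereas the genuine work, the Poisson-summation step and the bookkeeping of the terms that combine with $-\log q=2\pi\Im(\tau)$ to yield the overall $\Im(\tau)$ prefactor, is precisely what your outline delegates to an unproven ``standard fact.''
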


The following lemma gives us some evaluations of $s_2(q),s_3(q),$ and $s_4(q)$ which will be used later in this section.
\begin{lemma}\label{L:s2}
Let $s_2(q),s_3(q),$ and $s_4(q)$ be as defined in Proposition~\ref{P:general}. Then 
\begin{align*}
s_2\left(q\left(\frac{\sqrt{-1}}{2}\right)\right)&=64,  &s_2\left(q\left(\frac{\sqrt{-3}}{2}\right)\right)&=256, \\ s_3\left(q\left(\frac{\sqrt{-3}}{3}\right)\right)&=108, &s_3\left(q\left(\frac{\sqrt{-6}}{3}\right)\right)&=216,  &s_3\left(q\left(\frac{\sqrt{-12}}{3}\right)\right)&=1458,\\
s_4\left(q\left(\frac{\sqrt{-2}}{2}\right)\right)&=256, 
&s_4\left(q\left(\frac{\sqrt{-4}}{2}\right)\right)&=648,
&s_4\left(q\left(\frac{\sqrt{-6}}{2}\right)\right)&=2304, \\
s_4\left(q\left(\frac{\sqrt{-10}}{2}\right)\right)&=20736,
&s_4\left(q\left(\frac{\sqrt{-18}}{2}\right)\right)&=614656.
\end{align*}
\end{lemma}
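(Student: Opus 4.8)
# Proof Proposal for Lemma~\ref{L:s2}

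The plan is to evaluate each of the eta-quotient functions $s_2, s_3, s_4$ at the specified CM points $\tau_0 = \sqrt{-N}/c$ by reducing everything to known special values of the Dedekind eta function and classical modular quantities at imaginary quadratic arguments. Concretely, each $s_j(q(\tau_0))$ is, by construction in Proposition~\ref{P:general}, a rational (indeed integral) modular function of level dividing $24$ evaluated at a CM point; such values are algebraic, and in these cases they turn out to be positive integers. So the first move is to rewrite each $s_j$ purely in terms of ratios of $\eta(\alpha\tau)$'s (clearing the $\Delta$'s via $\Delta = \eta^{24}$) and then exploit the modular transformation $\eta(-1/\tau) = \sqrt{-i\tau}\,\eta(\tau)$ together with the Fricke/Atkin--Lehner involutions appropriate to each level to pin down the value.

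First I would treat $s_2$. Writing $s_2(q) = -\Delta(\tau + \tfrac12)/\Delta(2\tau+1) = -\bigl(\eta(\tau+\tfrac12)/\eta(2\tau+1)\bigr)^{24}$, at $\tau = \sqrt{-1}/2$ one has $2\tau + 1 = 1 + i$ and $\tau + \tfrac12 = \tfrac12(1+i)$, so the two arguments are related by the substitution $\tau \mapsto \tau/2$ combined with an integer translation; the ratio $\eta(\tfrac{1+i}{2})/\eta(1+i)$ is a classical value computable from $\eta(i) = \Gamma(\tfrac14)/(2\pi^{3/4})$ and the duplication/transformation formulas for $\eta$. Similarly $\tau = \sqrt{-3}/2$ feeds into values at arguments lying above $\sqrt{-3}$, where $\eta(\tfrac{1+\sqrt{-3}}{2})$ (a sixth root of unity times a Gamma expression) is again classical. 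I would organize this uniformly: the key identity is that $s_2$, as a Hauptmodul-type function on $X_0(4)$ (up to the level-raising twist $\tau \mapsto \tau + \tfrac12$), takes the stated values because the relevant $\tau_0$ are exactly the CM points where the underlying elliptic curve has $j$-invariant $1728$ or $0$ respectively—so $s_2(q(\tau_0))$ is forced to a specific root of an explicit polynomial, and a numerical check (to low precision, then confirmed algebraically) selects $64$ and $256$.

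For $s_3$, note $27(\eta(3\tau)/\eta(\tau))^6 + (\eta(\tau)/\eta(3\tau))^6$ is (up to normalization) the standard Hauptmodul for $\Gamma_0(3)$, whose square is $s_3$; at $\tau = \sqrt{-3}/3, \sqrt{-6}/3, \sqrt{-12}/3$ the Fricke involution $W_3\colon \tau \mapsto -1/(3\tau)$ fixes $\sqrt{-3}/3$ and permutes the other two points in a controlled way, and $\eta(3\tau)/\eta(\tau)$ transforms into its reciprocal (times $3^{-1/2}$) under $W_3$; this, plus the fact that $(\eta(\tau)/\eta(3\tau))^6$ is real and positive on the imaginary axis, reduces each evaluation to solving a quadratic. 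For $s_4$ I would do the same with the Hauptmodul $16(\eta(\tau)\eta(4\tau)^2/\eta(2\tau)^3)^4 + (\eta(2\tau)^3/\eta(\tau)\eta(4\tau)^2)^4$ for $\Gamma_0(4)$, using the Atkin--Lehner involution $W_4$ at the CM points $\sqrt{-2}/2, \sqrt{-4}/2, \ldots, \sqrt{-18}/2$, and the prefactor $\Delta(2\tau)/\Delta(\tau) = (\eta(2\tau)/\eta(\tau))^{24}$, which is itself a known $\Gamma_0(2)$ modular unit with computable CM values. In every case the outcome is an algebraic number that one verifies equals the asserted integer.

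The main obstacle will be the bookkeeping: tracking the half-integer translations and the precise automorphy factors (roots of unity and powers of $\sqrt{2}, \sqrt{3}$) through $\eta(-1/\tau) = \sqrt{-i\tau}\,\eta(\tau)$ and $\eta(\tau+1) = e^{\pi i/12}\eta(\tau)$ so that the final quantities come out real and positive rather than off by a root of unity, and then certifying—not merely checking numerically—that the algebraic value is the claimed integer. A clean way to finish is: show each $s_j(q(\tau_0))$ satisfies a monic integer polynomial of small degree (coming from the modular equation relating $s_j$ to the $j$-line and the known $j(\tau_0)$), show it is real and of the right sign/size, and conclude it equals the unique integer root. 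Alternatively, and perhaps most efficiently, one invokes Proposition~\ref{P:general} in reverse together with Rogers' hypergeometric evaluations from \cite{RogersMain}: the values $64, 128, 256$ and their companions are precisely the thresholds and special arguments appearing there, so the identifications $s_2(q(\sqrt{-1}/2)) = 64$, etc., are already implicit in the literature and need only be extracted and stated. I would present the direct eta-evaluation argument as the primary proof and remark on this cross-check.
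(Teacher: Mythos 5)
Your overall strategy --- rewrite each $s_j$ as an eta quotient, push the CM points around with $\eta(-1/\tau)=\sqrt{-i\tau}\,\eta(\tau)$ and the Fricke involutions, and pin down the resulting algebraic number as a specific integer --- is sound and would eventually work, but it is not the route the paper takes, and the paper's route is much lighter. The actual proof observes that $s_2(q)=\mathfrak{f}(2\tau)^{24}$ and that
$s_4\bigl(q(\sqrt{-m}/2)\bigr)$ is an explicit rational expression in $\mathfrak{f}_1(\sqrt{-m})$ and $\mathfrak{f}_1(\sqrt{-4m})$, where $\mathfrak{f}=e^{-\pi i/24}\eta(\tfrac{\tau+1}{2})/\eta(\tau)$ and $\mathfrak{f}_1=\eta(\tfrac{\tau}{2})/\eta(\tau)$ are the Weber modular functions; the $s_3$ values are likewise reduced to $\mathfrak{f}$, $\mathfrak{f}_1$ via the inversion formula and $\mathfrak{f}_1(2\tau)=\mathfrak{f}(\tau)\mathfrak{f}_1(\tau)$. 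All ten evaluations then become arithmetic with surds read off from Weber's classical table (as corrected by Brillhart--Morton), e.g.\ $s_2(q(\sqrt{-1}/2))=\mathfrak{f}(\sqrt{-1})^{24}=(2^{1/4})^{24}=64$. What the paper's approach buys is that the hard CM evaluations are outsourced to a citable table; what yours buys is self-containedness, at the cost of actually deriving the modular equations and class invariants that Weber's table encapsulates.

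Two concrete cautions about your plan as written. First, the decisive computations are all deferred: for discriminants of class number $2$ such as $-40$ and $-72$ (the points $\sqrt{-10}/2$ and $\sqrt{-18}/2$), the value of your Hauptmodul is genuinely a quadratic irrationality before the final combination, so ``show it satisfies a monic integer polynomial and pick the unique integer root'' requires you to exhibit that polynomial and rule out the conjugate root --- this is the entire content of the lemma and cannot be waved at. Relatedly, $W_3$ does not permute $\{\sqrt{-6}/3,\sqrt{-12}/3\}$; it sends $\sqrt{-6}/3$ to $\sqrt{-6}/6$, so the quadratic you hope to solve comes from $\Gamma_0(3)$-equivalence of $\tau_0$ with $W_3\tau_0$ (or with the other form class), which must be checked case by case. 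Second, your proposed shortcut of extracting the identifications from Rogers \cite{RogersMain} is not available: Rogers' paper does not prove $s_2(q(\sqrt{-1}/2))=64$ and its companions; these evaluations are established here precisely because they are needed and not already in the literature.
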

\begin{proof}
Let us consider the following two Weber modular functions:
\begin{align*} 
\mathfrak{f}(\tau)&:=e^{-\frac{\pi i}{24}}\frac{\eta\left(\frac{\tau+1}{2}\right)}{\eta(\tau)},\\
\mathfrak{f}_1(\tau)&:=\frac{\eta\left(\frac{\tau}{2}\right)}{\eta(\tau)}.
\end{align*}
Weber listed a number of special values of these functions in \cite[p.~721]{Weber}, including 
\begin{align*}
\mathfrak{f}\left(\sqrt{-1}\right)&=2^{\frac{1}{4}}, &\mathfrak{f}\left(\sqrt{-3}\right)&=2^{\frac{1}{3}},\\
\mathfrak{f}_1\left(\sqrt{-2}\right)&=2^{\frac{1}{4}},
&\mathfrak{f}_1\left(\sqrt{-8}\right)^8&=8+8\sqrt{2},\\
\mathfrak{f}_1\left(\sqrt{-4}\right)&=8^{\frac{1}{8}},
&\mathfrak{f}_1\left(\sqrt{-16}\right)^4&=2^{\frac{7}{4}}(1+\sqrt{2}),\\
\mathfrak{f}_1\left(\sqrt{-6}\right)^6&=4+2\sqrt{2},
&\mathfrak{f}_1\left(\sqrt{-24}\right)^{24}&=2^9\left(1+\sqrt{2}\right)^2\left(2+\sqrt{3}\right)^3\left(\sqrt{2}+\sqrt{3}\right)^3,\\
\sqrt{2}\mathfrak{f}_1\left(\sqrt{-10}\right)^2&=1+\sqrt{5},
&\mathfrak{f}_1\left(\sqrt{-40}\right)^8&=2\left(1+\sqrt{5}\right)^2\left(1+\sqrt{2}\right)^2\left(3+\sqrt{10}\right),\\
\mathfrak{f}_1\left(\sqrt{-18}\right)^3&=2^{\frac{3}{4}}\left(\sqrt{2}+\sqrt{3}\right),&\mathfrak{f}_1\left(\sqrt{-72}\right)^{24}&=2^7\left(2+\sqrt{6}\right)^4\left(1+\sqrt{2}\right)^9\left(2+\sqrt{3}\right)^6,\\
\mathfrak{f}_1\left(\sqrt{-12}\right)^4&=2^{\frac{7}{6}}\left(1+\sqrt{3}\right).
\end{align*}
(Actually, there are some typographical errors in the original table containing these values, which were corrected later by Brillhart and Morton \cite{BM}.)\\
Since $\Delta(\tau)$ is a modular form for the full modular group $\Gamma(1)$, we have immediately that $$s_2(q)=\mathfrak{f}(2\tau)^{24},$$
so the first two equalities in the lemma follow easily. 
Note also that 
$$\eta\left(-\frac{1}{\tau}\right)=\sqrt{-i\tau}\eta(\tau).$$ 
Hence 
\begin{equation*}
\frac{\eta\left(\sqrt{-3}\right)}{\eta\left(\frac{\sqrt{-3}}{3}\right)}=\frac{1}{3^\frac{1}{4}},\qquad
\frac{\eta\left(\sqrt{-6}\right)}{\eta\left(\frac{\sqrt{-6}}{3}\right)}=\left(\frac{2}{3}\right)^{\frac{1}{4}}\frac{\eta\left(\sqrt{-6}\right)}{\eta\left(\frac{\sqrt{-6}}{2}\right)}=\left(\frac{2}{3}\right)^{\frac{1}{4}}\frac{1}{\mathfrak{f}_1\left(\sqrt{-6}\right)},
\end{equation*}
and
\begin{equation*}
\frac{\eta\left(\sqrt{-12}\right)}{\eta\left(\frac{\sqrt{-12}}{3}\right)}=\left(\frac{2}{\sqrt{3}}\right)^{\frac{1}{2}}\frac{\eta\left(\sqrt{-12}\right)}{\eta\left(\frac{\sqrt{-12}}{4}\right)}=\left(\frac{2}{\sqrt{3}}\right)^{\frac{1}{2}}\frac{1}{\mathfrak{f}_1\left(\sqrt{-12}\right)\mathfrak{f}_1\left(\sqrt{-3}\right)}=\left(\frac{2}{\sqrt{3}}\right)^{\frac{1}{2}}\frac{\mathfrak{f}\left(\sqrt{-3}\right)}{\mathfrak{f}_1\left(\sqrt{-12}\right)^2},
\end{equation*} where the last equality follows from the relation $$\mathfrak{f}_1(2\tau)=\mathfrak{f}(\tau)\mathfrak{f}_1(\tau).$$ These enable us to evaluate $\displaystyle s_3\left(q(\tau)\right)$ for $\tau\in\left\{\frac{\sqrt{-3}}{3},\frac{\sqrt{-6}}{3},\frac{\sqrt{-12}}{3}\right\}.$ \\
Finally, observe that for every $m\in\mathbb{N}$
\begin{equation*}
s_4\left(q\left(\frac{\sqrt{-m}}{2}\right)\right)=\frac{1}{\mathfrak{f}_1(\sqrt{-m})^{24}}\left(16\frac{\mathfrak{f}_1(\sqrt{-m})^4}{\mathfrak{f}_1(\sqrt{-4m})^8}+\frac{\mathfrak{f}_1(\sqrt{-4m})^8}{\mathfrak{f}_1(\sqrt{-m})^4}\right)^4.
\end{equation*}
Using Weber's results above, one can check in a straightforward manner that the evaluations of $s_4(q)$ in the lemma hold.
\end{proof}

\begin{proof}[Proof of Proposition~\ref{P:general}]
We prove this proposition mainly using the method due to Bertin~ \cite{BertinMain}.  Assume that $t:=\Im(\tau)\geq 1/2$. Since $q$ is real, $|s_2(q)|\geq s_2\left(e^{-\pi}\right)=64$ by Lemma~\ref{L:s2}. Analyzing the proof of \cite[Thm. ~2.3]{RogersMain}, one sees that the corresponding $|q|=e^{-2\pi t}$ is small enough to imply 
\begin{equation} \label{E:f2G}
f_2(s_2(q))=-\frac{2}{15}G(q)-\frac{1}{15}G(-q)+\frac{3}{5}G(q^2),
\end{equation}
where $$G(q)=\Re\left(-\log(q)+240\sum_{n=1}^{\infty}n^2\log(1-q^n)\right).$$
It was also shown in the same theorem that 
\begin{equation} \label{E:FnalG}
G(-q)=9G(q^2)-4G(q^4)-G(q).
\end{equation} 
Substituting \eqref{E:FnalG} into \eqref{E:f2G} yields
\begin{equation} \label{E:f2G2}
f_2(s_2(q))=-\frac{1}{15}G(q)+\frac{4}{15}G(q^4).
\end{equation}

From now on we let $\sigma_3(n)=\displaystyle\sum_{d|n}d^3$, $E_4(q)=1+240\displaystyle\sum_{n=1}^{\infty}\sigma_3(n)q^n,$ the Eisenstein series of weight $4$ for $\Gamma(1)$, $D=q\displaystyle\frac{d}{dq}$, and $\Li_k(z)=\displaystyle\sum_{m=1}^{\infty}\frac{z^m}{m^k}$, the usual polylogarithm function.\\
Since $q\in(0,1)$, it follows by taking differentials in \eqref{E:f2G2} that
\begin{align*} df_2(s_2(q))&=\left(\frac{1}{15}E_4(q)-\frac{16}{15}E_4(q^4)\right)\frac{dq}{q}\\
            &=-\frac{1}{q}+\sum_{n\geq 1}\sigma_3(n)(16q^{n-1}-256q^{4n-1})dq.
\end{align*}
Then we integrate both sides and use the identity $$D^2\left(\Li_3\left(q^{jd}\right)\right)=(jd)^2\Li_1\left(q^{jd}\right),\hspace{10 mm}\, j,d\in\mathbb{N},$$ to recover
\begin{equation}\label{E:f2D} 
\begin{aligned}f_2(s_2(q))&=\Re\left(-2\pi i\tau+\sum_{n\geq 1}\sigma_3(n)\left(16\frac{q^n}{n}-64\frac{q^{4n}}{n}\right)\right)\\
&= \Re\left(-2\pi i\tau+16D^2\left(\sum_{d\geq 1}\Li_3(q^d)-\frac{1}{4}\Li_3(q^{4d})\right)\right).
\end{aligned}
\end{equation}
For $j=1,4$ let $$F_j(\xi)=\displaystyle\sum_{d\geq 1}\Li_3(q^{jd+\xi})=\sum_{d\geq 1}\sum_{m\geq 1}\frac{e^{2\pi i\tau m(jd+\xi)}}{m^3}.$$ It is not hard to see that $F_j(\xi)$ is differentiable at $\xi=0$. Indeed, for any  $\xi\in \left(-\frac{1}{2},\frac{1}{2}\right)$ 
\begin{align*}
\left |\Li_3(q^{jd+\xi})\right |=\left |\sum_{m\geq 1}\frac{e^{2\pi i\tau m(jd+\xi)}}{m^3}\right | = \sum_{m\geq 1}\frac{e^{-2\pi t m(jd+\xi)}}{m^3}\leq \sum_{m\geq 1}\frac{e^{-2\pi t (jd+\xi)}}{m^3}
= e^{-2\pi t (jd+\xi)}\zeta(3),
\end{align*}
where $\zeta$ is the Riemann zeta function.
Since $e^{2\pi t}>1$, it is immediate that $\displaystyle\sum_{d\geq 1}e^{-2\pi t (jd+\xi)}\zeta(3)$ converges. Therefore, it follows by the Weierstrass M-test that $\displaystyle\sum_{d\geq 1}\Li_3(q^{jd+\xi})$ converges uniformly on $\left(-\frac{1}{2},\frac{1}{2}\right)$. It is easily seen that $\Li_3(q^{jd+\xi})$ is differentiable at $\xi=0$ and hence so is $F_j(\xi)$. As a consequence, we have from a basic fact in Fourier analysis (cf.~\cite[Thm.~3.2.1]{Stein}) that the Fourier series of $F_j(\xi)$ converges pointwise to $F_j(\xi)$ at $\xi=0$; i.e., 
$$F_j(0)=\sum_{n\in \mathbb{Z}}\hat{F_j}(n),$$ where $\hat{F_j}(n)$ denote the Fourier coefficients of $F_j$. Following similar computations to those in \cite{BertinMain}, one sees that
\begin{equation*}
\hat{F_j}(n)= \begin{cases}
                 -\displaystyle\frac{1}{2\pi i}\sum_{m\geq 1}\frac{1}{m^3(jm\tau-\frac{n}{4})}         &\text{if } 4|n,\\
                 0                         &\text{otherwise.} 
              \end{cases}\
\end{equation*}
Since $F_j(0)=\displaystyle\sum_{d\geq 1}\Li_3(q^{jd})$ and $D^2=-\displaystyle\frac{1}{4\pi^2}\frac{d^2}{d\tau^2}$, we have from \eqref{E:f2D} that
\begin{align*}
f_2(s_2(q))&=\Re\left(-2\pi i\tau+16D^2\left(F_1(0)-\frac{1}{4}F_4(0)\right)\right)\\
           &=\Re\left(-2\pi i\tau+\frac{8i}{\pi}D^2\left(\sum_{n\in\mathbb{Z}}\sum_{m\geq 1}\frac{1}{m^3}\left(\frac{1}{m\tau+n}-\frac{1}{4(4m\tau+n)}\right)\right)\right)\\
           &=\Re\left(-2\pi i\tau-\frac{4i}{\pi^3}\sum_{n\in\mathbb{Z}}\sum_{m\geq 1}\frac{1}{m}\left(\frac{1}{(m\tau+n)^3}-\frac{4}{(4m\tau+n)^3}\right)\right)\\
           &=\Re\left(-i\left(2\pi \tau+\frac{2}{\pi^3}\sum_{n\in\mathbb{Z}}\sum_{m\neq 0}\frac{1}{m}\left(\frac{1}{(m\tau+n)^3}-\frac{4}{(4m\tau+n)^3}\right)\right)\right)\\
           &=\Im\left(2\pi \tau+\frac{2}{\pi^3}\sum_{n\in\mathbb{Z}}\sum_{m\neq 0}\frac{1}{m}\left(\frac{1}{(m\tau+n)^3}-\frac{4}{(4m\tau+n)^3}\right)\right)\\
    &=\frac{2\Im(\tau)}{\pi^3}\sideset{}{'}\sum_{m,n\in\mathbb{Z}}\biggl(-\left(2\Re\left(\frac{1}{(m\tau+n)^3(m\bar{\tau}+n)}\right)+\frac{1}{[(m\tau+n)(m\bar{\tau}+n)]^2}\right)\\&\qquad+16\left(2\Re\left(\frac{1}{(4m\tau+n)^3(4m\bar{\tau}+n)}\right)+\frac{1}{[(4m\tau+n)(4m\bar{\tau}+n)]^2}\right)\biggr),
\end{align*}
where we have applied the same tricks from \cite{BertinMain} to obtain the last equality. We then use the fact that $2\Re(z)=z+\bar{z}$ for any $z\in \mathbb{C}$ to finish the proof of (i).

One can prove (ii) and (iii) in a similar fashion. For we again have from \cite[Thm. ~2.3]{RogersMain} that under the assumption stated in the proposition 
\begin{align*}
f_3(s_3(q))&=-\frac{1}{8}G(q)+\frac{3}{8}G(q^3),\\
f_4(s_4(q))&=-\frac{1}{3}G(q)+\frac{2}{3}G(q^2).
\end{align*}
\end{proof}

Let us prove some crucial lemmas before establishing Theorem~\ref{T:Main}.
\begin{lemma}\label{L:CM}
If $f,g,$ and $h$ are as defined in Theorem~\ref{T:Main}, then
\begin{align}
f(\tau)&=\sum_{m,n\in\mathbb{Z} }\frac{m^2-2n^2}{2}q^{m^2+2n^2},\label{E:f}\\
g(\tau)&=\sum_{m,n\in\mathbb{Z} }\frac{m^2-3n^2}{2}q^{m^2+3n^2},\label{E:g}\\
h(\tau)&=\sum_{m,n\in\mathbb{Z} }\frac{m^2-4n^2}{2}q^{m^2+4n^2}.\label{E:h}
\end{align}
\end{lemma}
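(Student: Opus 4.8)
The plan is to realize each eta quotient $f$, $g$, $h$ as a weight-$3$ theta series attached to a Hecke character of an imaginary quadratic field, and then match Fourier coefficients. Since $f,g,h$ are known (from \cite{DKM}) to be the unique CM newforms in $S_3(\Gamma_0(8),\chi_{-8})$, $S_3(\Gamma_0(12),\chi_{-3})$, $S_3(\Gamma_0(16),\chi_{-4})$ respectively, it suffices to show that the right-hand sides of \eqref{E:f}, \eqref{E:g}, \eqref{E:h} are holomorphic modular forms of weight $3$ lying in the same spaces, with matching leading coefficients, and then invoke uniqueness. So the real content is to identify the $q$-series $\sum_{m,n}\frac{m^2-Dn^2}{2}q^{m^2+Dn^2}$ (for $D=2,3,4$) as genuine Hecke theta series.

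First I would set up the Hecke-character description: for $D=2$ take the ring of integers $\mathcal{O}=\ZZ[\sqrt{-2}]$ of $\QQ(\sqrt{-2})$ (class number $1$), with the grossencharacter $\psi(\mathfrak{a})=\alpha^2$ where $\alpha$ is the generator of $\mathfrak{a}$ normalized suitably; then $\sum_{\mathfrak{a}}\psi(\mathfrak{a})q^{N\mathfrak{a}} = \frac12\sum_{\alpha\in\mathcal{O}}\alpha^2 q^{|\alpha|^2}$, and writing $\alpha = m + n\sqrt{-2}$ gives $\alpha^2 = (m^2-2n^2) + 2mn\sqrt{-2}$ and $|\alpha|^2 = m^2+2n^2$. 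The imaginary parts cancel in the sum by the symmetry $n\mapsto -n$, leaving exactly $\sum_{m,n}\frac{m^2-2n^2}{2}q^{m^2+2n^2}$. By Hecke's theorem this theta series is a cusp form of weight $3$, level $|d_K|\cdot N\mathfrak{f}$ with nebentypus $\chi_{d_K}$, which for the trivial-conductor character gives exactly $S_3(\Gamma_0(8),\chi_{-8})$; similarly $D=3$ lands in $S_3(\Gamma_0(12),\chi_{-3})$ and $D=4$ (using $\ZZ[i]$, class number $1$, units $\{\pm1,\pm i\}$, so one must be careful that the sum over units of $\alpha^2$ does not vanish — it does not, since $i^2=-1\neq 1$ but $\sum_{u}u^2 = 0$; here the right normalization uses ideals, i.e. $\frac14\sum_{\alpha\neq0}\alpha^2 q^{|\alpha|^2}$, which still equals $\sum_{m,n}\frac{m^2-4n^2}{2}q^{m^2+4n^2}$ after accounting for the $4$ units and the factor $\frac12$) into $S_3(\Gamma_0(16),\chi_{-4})$. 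Alternatively, and perhaps more in the spirit of the paper, one can avoid Hecke characters entirely: each $\sum_{m,n}\frac{m^2-Dn^2}{2}q^{m^2+Dn^2}$ is visibly $\frac14\big(\theta_3'\theta_{2D}\big)$-type combination of classical theta constants $\theta(\tau)=\sum q^{n^2}$ and their derivatives, hence a quasimodular object that one checks is actually modular of weight $3$; then compare the first several Fourier coefficients against the eta-quotient expansions ($\eta(4\tau)^6 = q - 6q^5 + 9q^9 + \cdots$, etc.) and conclude by the Sturm bound that the two forms agree.

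The concrete finish: compute enough Fourier coefficients of the eta quotients $f(\tau)=\eta(\tau)^2\eta(2\tau)\eta(4\tau)\eta(8\tau)^2$, $g(\tau)=\eta(2\tau)^3\eta(6\tau)^3$, $h(\tau)=\eta(4\tau)^6$ — using the pentagonal-number expansion of $\prod(1-q^n)$ — up to the Sturm bound for the relevant $S_3(\Gamma_0(N),\chi)$ (which is small, on the order of $N/2 \cdot [\Gamma(1):\Gamma_0(N)]/12$, a dozen or so coefficients), and check they agree with the coefficients extracted from $\sum_{m,n}\frac{m^2-Dn^2}{2}q^{m^2+Dn^2}$ by enumerating lattice points. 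Since both sides are already shown to be modular forms of the same weight, level and character, agreement up to the Sturm bound forces equality.

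The main obstacle I anticipate is the weight-$3$ modularity of the theta series with the correct level and nebentypus — in particular getting the level right (the $|d_K|$ versus $N\mathfrak f$ bookkeeping, and the factor-of-$2$ normalizations coming from units and from the $\frac12$ in $\frac{m^2-Dn^2}{2}$) and confirming these are \emph{newforms} rather than oldforms or non-cuspidal. This is where citing \cite{DKM} (which already identifies $f,g,h$ as the CM newforms in these spaces) does the heavy lifting: once we know the target space contains a \emph{unique} normalized CM newform and our theta series is a nonzero element of weight $3$ in that space with the right leading coefficient, we are done. If one prefers a self-contained route, Hecke's theorem (e.g. as in Shimura or Iwaniec) gives the modularity directly and the only remaining work is the routine coefficient comparison.
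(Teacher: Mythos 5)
Your overall strategy --- realize each series as a Hecke theta series attached to a weight-one Gr\"ossencharacter of an imaginary quadratic field, place it in the correct space $S_3(\Gamma_0(N),\chi)$, and finish by comparing Fourier coefficients up to the Sturm bound --- is exactly the route the paper takes (the paper carries this out in full for $h$ and cites Bertin for $f$ and $g$). The $D=2$ case is fine as you wrote it: $\ZZ[\sqrt{-2}]$ has class number one and unit group $\{\pm1\}$, so $\frac12\sum_{\alpha}\alpha^2q^{|\alpha|^2}$ is literally the ideal-theoretic theta series and the imaginary parts cancel under $n\mapsto-n$.

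However, your treatment of $D=4$ contains a genuine error rather than a mere bookkeeping issue. As you yourself observe, $\sum_{u\in\ZZ[i]^\times}u^2=1+1-1-1=0$; the consequence is that $\frac14\sum_{\alpha\in\ZZ[i]\setminus\{0\}}\alpha^2q^{|\alpha|^2}$ is \emph{identically zero} (the terms for $\alpha$ and $i\alpha$ cancel), so it cannot equal $\sum_{m,n}\frac{m^2-4n^2}{2}q^{m^2+4n^2}$, and the assignment $\mathfrak{a}\mapsto\alpha^2$ is not even well defined on all ideals of $\ZZ[i]$. The cure is precisely what the paper does: take a Gr\"ossencharacter of \emph{nontrivial conductor} $\Lambda=(2)$, defined by $\phi((m+in))=(m+in)^2$ only for generators with $m$ odd and $n$ even; among the four associates of such an element exactly two ($\pm\alpha$) satisfy the congruence, and they have the same square, so $\phi$ is well defined and the theta series is nonzero. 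One then still has to argue (as the paper does via the Hecke multiplicativity relation \eqref{E:eigen}) that the coefficients $a(k)$ vanish for $k$ not represented by $m^2+4n^2$ with $m$ odd, and that the restriction to $m$ odd can be dropped because the even-$m$ subsum vanishes by the symmetry $(m,n)\mapsto(n,m)$ after rescaling. The same unit problem, unaddressed in your sketch, afflicts $D=3$: the maximal order of $\QQ(\sqrt{-3})$ has six units with $\sum_u u^2=0$, which is why the paper (following Bertin) works with the non-maximal order $\ZZ[2\sqrt{-3}]$ rather than with a naive character on $\ZZ[\omega]$. Until the conductor/order is specified correctly in these two cases, the construction produces either the zero form or an ill-defined character, so this step needs to be repaired before the Sturm-bound comparison can even begin.
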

\begin{proof}
We will show \eqref{E:h} first. Let $K=\mathbb{Q}(i)$, $\mathcal{O}_K = \mathbb{Z}[i]$, $\Lambda=(2)\subset \mathcal{O}_K$, and $I(\Lambda)$ = the group of fractional ideals of $\mathcal{O}_K$ coprime to $\Lambda$. Then we define the Hecke Gr\"{o}ssencharacter $\phi:I(\Lambda)\rightarrow \mathbb{C}^{\times}$ of conductor $\Lambda$ by $$\phi((m+in))=(m+in)^2$$ for any $m,n\in \mathbb{Z}$ such that $m$ is odd and $n$ is even, and let 
$$\Psi(\tau):=\sum_{\mathfrak{a}\subseteq\mathcal{O}_K}\phi(\mathfrak{a})q^{N(\mathfrak{a})}=\sum_{k=1}^{\infty}a(k)q^k,$$
where the sum runs through the integral ideals of $\mathcal{O}_K$ coprime to $\Lambda$ and $N(\mathfrak{a})$ denotes the norm of the ideal $\mathfrak{a}$. It then follows from \cite[Thm.~1.31]{Ono} that $\Psi(\tau)$ is a newform in $S_3(\Gamma_0(16),\chi_{-4}).$ Moreover, by \cite[Ex.~1.33]{Ono}, we have that $a(p)=0$ for every prime $p\equiv 3 \pmod 4$, and if $p$ is a prime such that $p=(m+in_0)(m-in_0)=m^2+n_0^2$ for some $m,n_0\in \mathbb{Z}$ with $m$ odd and $n_0=2n$, then $a(p)=2(m^2-4n^2).$ Also, it is clear by the definition of $\Psi$ that $a(k)=0$ for every $k\in\mathbb{N}_\text{even}$. Next, we shall examine $a(k)$ explicitly for each $k\in \mathbb{N}_\text{odd}$. \\

Recall first that since $\Psi(\tau)$ is a Hecke eigenform in $S_3(\Gamma_0(16),\chi_{-4})$, 
\begin{equation}\label{E:eigen}
a(k)a(l)=\sum_{d|(k,l)}\chi_{-4}(d)d^2 a\left(\frac{kl}{d^2}\right)
\end{equation}
holds for all $k,l\in\mathbb{N}$ (cf.~\cite[Ch.~6]{Iwaniec}). If $k$ is odd and all prime factors of $k$ are congruent to $1$ modulo $4$, then it is easily seen by induction that $k=m^2+4n^2$ for some $m,n\in\mathbb{Z}$ with $m$ odd. Now suppose $k$ is odd and $k$ has a prime factor congruent to $3$ modulo $4$, say
$$k=\prod_{p_i\equiv 1\pmod 4}p_i\cdot\prod_{r_j\equiv 3\pmod 4}r_j$$ for some primes $p_i$ and $r_j$. If $\displaystyle\prod_{r_j\equiv 3\pmod 4}r_j$ is a perfect square, then $k$ is again of the form $k=m^2+4n^2$ with $m$ odd. Otherwise, there exists a prime factor $r\equiv 3 \pmod 4$ of $k$ such that $r^l\|k$ for some odd $l$. But then it can be shown inductively using \eqref{E:eigen} that $a(r^l)=0$, so $a(k)$ vanishes in this case. 
Note that for any $k=m^2+4n^2$ with $m$ odd 
\begin{align*}
a(k)&= \begin{cases} \phi((m+2in))+\phi((m-2in)) & \mbox{if } n\neq 0, \\ \phi((m)) & \mbox{if } n=0, \end{cases}\\
&=\begin{cases} 2(m^2-4n^2) & \mbox{if } n\neq 0, \\ m^2 & \mbox{if } n=0. \end{cases}
\end{align*}
Consequently, we may express $\Psi(\tau)$ as 
\begin{equation*}\label{E:Hecke}
\Psi(\tau)=\sum_{k=1}^{\infty}a(k)q^k=\sum_{\substack{m,n\in\mathbb{Z} \\ m \text{  odd}}}\frac{m^2-4n^2}{2}q^{m^2+4n^2}=\sum_{m,n\in\mathbb{Z} }\frac{m^2-4n^2}{2}q^{m^2+4n^2},
\end{equation*}
since $$\sum_{\substack{m,n\in\mathbb{Z} \\ m \text{  even}}}\frac{m^2-4n^2}{2}q^{m^2+4n^2}=0.$$
Computing the first few Fourier coefficients of $\Psi(\tau)$ we see that 
$$\Psi(\tau)=q-6q^5+9q^9+\cdots.$$
On the other hand, we know from \cite{DKM} that 
\begin{equation*}
\eta(4\tau)^6=q-6q^5+9q^9+\cdots\in S_3(\Gamma_0(16),\chi_{-4}).
\end{equation*}
Hence
\begin{equation*}\label{E:Sturm}
h(\tau)=\eta(4\tau)^6=\Psi(\tau)
\end{equation*} by Sturm's theorem (cf.~\cite[Thm.~2.58]{Ono}).

The equalities \eqref{E:f} and \eqref{E:g} can be established in a similar way. Indeed, we see from \cite{Bertin} and \cite{BertinMain} that $f(\tau)$ and $g(\tau)$ are the inverse Mellin transforms of the Hecke $L$-series with respect to some weight 3 Hecke Gr\"{o}ssencharacters defined for the rings $\mathbb{Z}[\sqrt{-2}]$ and $\mathbb{Z}[2\sqrt{-3}],$ respectively.
\end{proof}

\begin{lemma} \label{L:Eta}
If $g_{48}$ and $g$ are as defined in Theorem~\ref{T:Main}, then the following identities hold: 
\begin{align}
g_{48}(\tau)&=\sum_{m,n\in\mathbb{Z}}\left(\left(\frac{m^2-12n^2}{2}\right)q^{m^2+12n^2}+\left(\frac{3m^2-4n^2}{2}\right)q^{3m^2+4n^2}\right),\label{E:EtaPete}\\
g(\tau)+8g(4\tau)&=\sum_{m,n\in\mathbb{Z}}\left(\left(\frac{m^2-12n^2}{2}\right)q^{m^2+12n^2}+\left(\frac{4n^2-3m^2}{2}\right)q^{3m^2+4n^2}\right).\label{E:EtaPeteII}
\end{align}
\end{lemma}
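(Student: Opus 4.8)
The plan is to treat both identities as equalities of $q$-expansions of weight-$3$ modular forms on $\Gamma_0(48)$ and to reduce each to a finite Fourier-coefficient check via Sturm's theorem, exactly as in the proof of Lemma \ref{L:CM}. First I would establish \eqref{E:EtaPete}. The right-hand side is, up to the factor $\tfrac12$ absorbing the $\pm(m,n)$ symmetry, a sum of two theta-type series attached to the quadratic forms $x^2+12y^2$ and $3x^2+4y^2$; these are precisely the two classes in the form class group of discriminant $-48$, so the combined series is the $q$-expansion of a Hecke $L$-series for a weight-$3$ Gr\"ossencharacter on $\QQ(\sqrt{-3})$ of the appropriate conductor (the character sends an ideal generated by $m+n\sqrt{-12}$ to its square, with a sign twist distinguishing the two genera — this is the standard CM construction, cf.~\cite[Thm.~1.31]{Ono}). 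Hence the right side lies in $S_3(\Gamma_0(48),\chi_{-3})$. On the other side, $g_{48}\in S_3(\Gamma_0(48),\chi_{-3})$ was already noted in the discussion after Theorem~\ref{T:Main} (via \cite[Thm.~1.64]{Ono}). Both spaces being the same, I would invoke Sturm's bound for $\Gamma_0(48)$ in weight $3$ — the index $[\Gamma(1):\Gamma_0(48)]=96$ gives a bound of $96\cdot 3/12 = 24$ — so it suffices to check agreement of the first $25$ or so Fourier coefficients, which is a routine (machine-assisted) computation; the first few coefficients $q+3q^3-2q^7+9q^9-22q^{13}-\cdots$ quoted in the paper already match the theta series.

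For \eqref{E:EtaPeteII} I would argue the same way. The function $g(\tau)+8g(4\tau)$ is an explicit linear combination of oldforms coming from $g\in S_3(\Gamma_0(12),\chi_{-3})$, hence lies in $S_3(\Gamma_0(48),\chi_{-3})$ (level $12\mid 48$, and $4$-scaling keeps us in level $48$). The right-hand side of \eqref{E:EtaPeteII} differs from that of \eqref{E:EtaPete} only in the sign of the $3x^2+4y^2$ contribution, so it too is a CM form in $S_3(\Gamma_0(48),\chi_{-3})$ — this time built from the opposite choice of genus character, which has the effect of replacing the eigenform $g_{48}$ (a twist of $g$ by $\chi_{-4}$) by a combination of $g$ and its level-raising. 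Again Sturm's theorem with the bound $24$ reduces the claim to checking finitely many coefficients. A clean way to organize this: since $g_{48}$ is the $\chi_{-4}$-twist of $g$, the sum and difference of the two theta series in \eqref{E:EtaPete}–\eqref{E:EtaPeteII} isolate the ``$+$'' and ``$-$'' parts of $g$ under $n\mapsto -n$ in the relevant lattice, and one recognizes these as $g_{48}$ and as the explicit oldform combination $g+8g(4\cdot)$ respectively by matching initial coefficients.

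The main obstacle is not conceptual but bookkeeping: one must correctly pin down the conductor of the Gr\"ossencharacter so that the theta series genuinely land in level $48$ (an off-by-a-factor-of-$2$ in the conductor would force level $192$ and break the Sturm check), and one must verify that $g(\tau)+8g(4\tau)$ — rather than some other linear combination like $g(\tau)+4g(2\tau)+\cdots$ — is the right oldform (the coefficient $8$ is forced by comparing the $q^4$-coefficients, since $g$ has no $q^2$ or $q^4$ term but the theta series on the right has a nonzero $q^4$ coefficient coming from $3\cdot 0^2+4\cdot 1^2$). Once the level and the multiplicity are correctly identified, both identities collapse to the same Sturm-bound verification used for \eqref{E:h}, and the proof is complete.
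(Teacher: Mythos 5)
Your strategy is sound and would yield a complete proof, but it is genuinely different from the paper's. The paper argues entirely at the level of $q$-series, with no appeal to modularity of the right-hand sides and no Sturm bound: it sets $\mathfrak{h}_1(\tau)$ equal to the lattice sum in \eqref{E:EtaPete}, observes that the even-$m$ terms cancel by symmetry, re-indexes the remaining double sum via the elementary identity $x^2+3y^2=\frac{3(x\mp y)^2+(x\pm 3y)^2}{4}$, encodes the resulting signs with $\chi_{-8}$ (resp.\ $\chi_{-4}$ for \eqref{E:EtaPeteII}), and collapses everything to $\sum_{m,n\in\mathbb{N}}\chi_{-8}(mn)\,mn\,q^{(3m^2+n^2)/4}$; this visibly factors as the product of two instances of K\"ohler's identity $\eta(2\tau)^9\eta(\tau)^{-3}\eta(4\tau)^{-3}=\sum_{n\in\mathbb{N}}\chi_{-8}(n)n q^{n^2/8}$, which is exactly $g_{48}$, while the analogous computation with $\eta(\tau)^3=\sum\chi_{-4}(n)nq^{n^2/8}$ identifies the second lattice sum (after subtracting the piece $\sum(4m^2-12n^2)q^{4m^2+12n^2}=8g(4\tau)$) with $\eta(2\tau)^3\eta(6\tau)^3=g$. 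Your route --- place both sides in $S_3(\Gamma_0(48),\chi_{-3})$ and compare coefficients up to the Sturm bound $\frac{3}{12}\cdot 96=24$ --- is the same method the paper uses for Lemma~\ref{L:CM}, and it does work here; the one point you rightly identify as delicate (pinning the level at $48$ rather than something larger) is most cleanly handled not through the conductor of a Gr\"ossencharacter for the non-maximal order $\mathbb{Z}[2\sqrt{-3}]$ but through Schoeneberg's theorem: $m^2-12n^2$ and $3m^2-4n^2$ are degree-two spherical polynomials for the forms $m^2+12n^2$ and $3m^2+4n^2$, each of level exactly $48$ with character $\chi_{-48}=\chi_{-3}$, so both theta series land in $S_3(\Gamma_0(48),\chi_{-3})$ and the finite check is legitimate (your $q^4$-coefficient computation forcing the multiplicity $8$ in $g(\tau)+8g(4\tau)$ is correct). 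The trade-off is that the paper's manipulation is computation-free and exact, and it produces along the way the explicit lattice-sum expressions that are reused in the proof of Theorem~\ref{T:Main}, whereas your argument is shorter to state but requires the level certification plus a machine verification of two dozen coefficients.
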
\begin{proof}
Let $$\mathfrak{h}_1(\tau):=\sum_{m,n\in\mathbb{Z}}\left(\left(\frac{m^2-12n^2}{2}\right)q^{m^2+12n^2}+\left(\frac{3m^2-4n^2}{2}\right)q^{3m^2+4n^2}\right).$$
Note that by the symmetry of the summation we have
$$\sum_{\substack{m,n\in\mathbb{Z} \\ m \text{  even}}}\left(\left(\frac{m^2-12n^2}{2}\right)q^{m^2+12n^2}+\left(\frac{3m^2-4n^2}{2}\right)q^{3m^2+4n^2}\right)=0.$$
Also, it is obvious that for all $x,y\in\mathbb{Z}$ 
$$x^2+3y^2=\frac{3(x-y)^2+(x+3y)^2}{4}=\frac{3(x+y)^2+(x-3y)^2}{4}.$$
Hence 
\begin{align*}
\mathfrak{h}_1(\tau)&=\sum_{\substack{m\in\mathbb{Z} \\ m \text{  odd}}}\sum_{\substack{n\in\mathbb{Z} \\ n \text{  even}}}\left(\left(\frac{m^2-3n^2}{2}\right)q^{m^2+3n^2}+\left(\frac{3m^2-n^2}{2}\right)q^{3m^2+n^2}\right)
\\&=\sum_{\substack{m\in\mathbb{Z} \\ m \text{  odd}}}\sum_{\substack{n\in\mathbb{Z} \\ n \text{  even}}}\biggl(\left(\frac{m^2-3n^2}{4}\right)q^{\frac{3(m-n)^2+(m+3n)^2}{4}}+\left(\frac{m^2-3n^2}{4}\right)q^{\frac{3(m+n)^2+(m-3n)^2}{4}}\\&\qquad\qquad +\left(\frac{3m^2-n^2}{4}\right)q^{\frac{3(m-n)^2+(3m+n)^2}{4}}+\left(\frac{3m^2-n^2}{4}\right)q^{\frac{3(m+n)^2+(3m-n)^2}{4}}\biggr)\\
&=\sum_{\substack{m>0 \\ m \text{  odd}}}\sum_{\substack{n\in\mathbb{Z} \\ n \text{  even}}}\biggl(\frac{(m-n)(m+3n)}{2}q^{\frac{3(m-n)^2+(m+3n)^2}{4}}+\frac{(m+n)(m-3n)}{2}q^{\frac{3(m+n)^2+(m-3n)^2}{4}}\\&\qquad\qquad +\frac{(m-n)(3m+n)}{2}q^{\frac{3(m-n)^2+(3m+n)^2}{4}}+\frac{(m+n)(3m-n)}{2}q^{\frac{3(m+n)^2+(3m-n)^2}{4}}\biggr)\\
&=\sum_{\substack{m>0 \\ m \text{  odd}}}\sum_{\substack{n>0 \\ n \text{  even}}}\biggl((m-n)(m+3n)q^{\frac{3(m-n)^2+(m+3n)^2}{4}}+(m+n)(m-3n)q^{\frac{3(m+n)^2+(m-3n)^2}{4}}\\&\qquad\qquad +(m-n)(3m+n)q^{\frac{3(m-n)^2+(3m+n)^2}{4}}+(m+n)(3m-n)q^{\frac{3(m+n)^2+(3m-n)^2}{4}}\biggr)\\
& \qquad\qquad+\sum_{\substack{m>0\\ m \text{  odd}}}\left(m^2q^{m^2}+3m^2q^{3m^2}\right).
\end{align*}
Let $\mathcal{A}=\{(k,l)\in \mathbb{N}_\text{odd}^2 \mid l\neq k \mbox{ and } l\neq 3k \}$ and $\mathcal{B}=\mathbb{N}_\text{odd}\times\mathbb{N}_\text{even}.$
Recall that for any $k\in\mathbb{N}$
\begin{equation*}
\chi_{-8}(k)= \begin{cases}
                 1         &\text{if } k\equiv 1,3 \pmod 8,\\
                 -1        &\text{if } k\equiv 5,7 \pmod 8,\\
                 0         &\text{if } k \text{ is even.}
              \end{cases}
\end{equation*}
Thus it is easy to verify that for all $(m,n)\in\mathcal{B}$ the following equalities are true:
$$m-n=\chi_{-8}(|m-n|(m+3n))|m-n|=\chi_{-8}(|m-n|(3m+n))|m-n|,$$
$$m-3n=\chi_{-8}((m+n)|m-3n|)|m-3n|,$$
$$3m-n=\chi_{-8}((m+n)|3m-n|)|3m-n|.$$
Let $(k,l)\in\mathcal{A}$. Then it is obvious that  $(3k^2+l^2)/4 \in\mathbb{N}_\text{odd}$.\\
If $(3k^2+l^2)/4 \equiv 1 \pmod 4$, then either $8|(k-l)$ or $8|(k+l)$, so letting
\begin{equation*}
(m,n)= \begin{cases}
        \left(\displaystyle\frac{3k+l}{4},\frac{|k-l|}{4}\right)         &\text{if } 8|(k-l),\\
        \left(\displaystyle\frac{|3k-l|}{4},\frac{k+l}{4}\right)         &\text{if } 8|(k+l),
        \end{cases}
\end{equation*} yields $(m,n)\in\mathcal{B}$.
Consequently, we have the equality
\begin{multline*}
\left\{(k,l)\in\mathcal{A}\mid \displaystyle\frac{3k^2+l^2}{4}\equiv 1 \pmod 4\right\}=\{(|m-n|,m+3n)\mid (m,n)\in\mathcal{B}\}\\ \dotcup \{(m+n,|m-3n|)\mid (m,n)\in\mathcal{B}\},
\end{multline*}
where $\dotcup$ denotes disjoint union, since the inclusion $\supseteq$ is obvious.\\ 
If $(3k^2+l^2)/4 \equiv 3 \pmod 4$, then either $8|(3k-l)$ or $8|(3k+l)$. Hence, if we let 
\begin{equation*}
(m,n)= \begin{cases}
        \left(\displaystyle\frac{k+l}{4},\frac{|3k-l|}{4}\right)         &\text{if } 8|(3k-l),\\
        \left(\displaystyle\frac{|k-l|}{4},\frac{3k+l}{4}\right)         &\text{if } 8|(3k+l),
        \end{cases}
\end{equation*}
then $(m,n)\in\mathcal{B},$ so 
\begin{multline*}
\left\{(k,l)\in\mathcal{A}\mid \displaystyle\frac{3k^2+l^2}{4}\equiv 3 \pmod 4\right\}=\{(|m-n|,3m+n)\mid (m,n)\in\mathcal{B}\}\\\dotcup \{(m+n,|3m-n|)\mid (m,n)\in\mathcal{B}\}.
\end{multline*}
Therefore, we can simplify the last expression of $\mathfrak{h}_1(\tau)$ above to obtain
\begin{equation*}
\mathfrak{h}_1(\tau)=\sum_{(k,l)\in\mathcal{A}}\chi_{-8}(kl)kl q^{\frac{3k^2+l^2}{4}}+\sum_{\substack{m>0\\ m \text{  odd}}}\left(m^2q^{m^2}+3m^2q^{3m^2}\right)=\sum_{m,n\in\mathbb{N}}\chi_{-8}(mn)mn q^{\frac{3m^2+n^2}{4}}.
\end{equation*}
Then \eqref{E:EtaPete} follows easily since $$g_{48}(\tau)=\left(\frac{\eta(4\tau)^9}{\eta(2\tau)^3\eta(8\tau)^3}\right)\left(\frac{\eta(12\tau)^9}{\eta(6\tau)^3\eta(24\tau)^3}\right)$$ and the following identity holds \cite[Prop.~1.6]{Kohler}:
\begin{equation*}
\frac{\eta(2\tau)^9}{\eta(\tau)^3\eta(4\tau)^3}=\sum_{n\in \mathbb{N}}\chi_{-8}(n)n q^{\frac{n^2}{8}}.
\end{equation*}

Now, let $$\mathfrak{h}_2(\tau):=\sum_{m,n\in\mathbb{Z}}\left(\left(\frac{m^2-12n^2}{2}\right)q^{m^2+12n^2}+\left(\frac{4n^2-3m^2}{2}\right)q^{3m^2+4n^2}-(4m^2-12n^2)q^{4m^2+12n^2}\right).$$
Then it is easy to see that 
$$\mathfrak{h}_2(\tau)=\sum_{\substack{m\in\mathbb{Z} \\ m \text{  odd}}}\sum_{\substack{n\in\mathbb{Z} \\ n \text{  even}}}\left(\left(\frac{m^2-3n^2}{2}\right)q^{m^2+3n^2}+\left(\frac{n^2-3m^2}{2}\right)q^{3m^2+n^2}\right).$$ 
Repeating the arguments above and using the fact that for every $(m,n)\in\mathcal{B}$
\begin{align*}
m-n &= \chi_{-4}\left(|m-n|(m+3n)\right)|m-n|,\\
n-m &= \chi_{-4}\left(|n-m|(3m+n)\right)|n-m|,\\
m-3n &= \chi_{-4}\left((m+n)|m-3n|\right)|m-3n|,\\
n-3m &= \chi_{-4}\left((m+n)|n-3m|\right)|n-3m|,\\
\end{align*}
we can deduce that 
\begin{equation*}
\mathfrak{h}_2(\tau)=\sum_{m,n\in\mathbb{N}}\chi_{-4}(mn)mn q^{\frac{3m^2+n^2}{4}}.
\end{equation*}
We then employ the $q$-series identity \cite[Cor.~1.4]{Kohler}
\begin{equation*}
\eta(\tau)^3=\sum_{n\in \mathbb{N}}\chi_{-4}(n)n q^{\frac{n^2}{8}}
\end{equation*}
to conclude that 
\begin{equation*}
g(\tau)=\eta(2\tau)^3 \eta(6\tau)^3=\mathfrak{h}_2(\tau).
\end{equation*}
By \eqref{E:g}, we see that 
$$\sum_{m,n\in\mathbb{Z}}(4m^2-12n^2)q^{4m^2+12n^2}=8g(4\tau),$$
so \eqref{E:EtaPeteII} follows.
\end{proof}

\begin{lemma} \label{L:Finale}
If $g_{24}^{(1)},g_{24}^{(2)}$ and $g_{40}$ are as defined in Theorem~\ref{T:Main} and $s\in\mathbb{C}$ with $\Re(s)>2$, then the following identities hold: 
\begin{align}
L(g_{24}^{(1)},s)&=\frac{1}{2}\sideset{}{'}\sum_{m,n\in\mathbb{Z}}\left(\frac{m^2-6n^2}{(m^2+6n^2)^{s}}+\frac{2m^2-3n^2}{(2m^2+3n^2)^{s}}\right),\label{E:Bertin}\\
L(g_{24}^{(2)},s)&=\frac{1}{2}\sideset{}{'}\sum_{m,n\in\mathbb{Z}}\left(\frac{m^2-6n^2}{(m^2+6n^2)^{s}}+\frac{3m^2-2n^2}{(3m^2+2n^2)^{s}}\right),\label{E:BertinT}\\
L(g_{40},s)&=\frac{1}{2}\sideset{}{'}\sum_{m,n\in\mathbb{Z}}\left(\frac{m^2-10n^2}{(m^2+10n^2)^{s}}+\frac{5m^2-2n^2}{(5m^2+2n^2)^{s}}\right).\label{E:BertinTh}
\end{align}
\end{lemma}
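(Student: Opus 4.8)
The plan is to follow the method Bertin uses in \cite{Bertin,BertinMain} for identities of this shape, which also underlies Lemmas~\ref{L:CM} and~\ref{L:Eta}: realize each $L$-function as the Hecke $L$-series of a weight-$3$ Gr\"{o}ssencharacter of an imaginary quadratic field of class number two, and then break the ideal sum according to the two ideal classes. For \eqref{E:Bertin} and \eqref{E:BertinT} the relevant field is $K=\QQ(\sqrt{-6})$, with $\mathcal{O}_K=\ZZ[\sqrt{-6}]$ of discriminant $-24$ and $h(K)=2$; the principal class is represented by the form $m^2+6n^2$ and the other by $2m^2+3n^2$. For \eqref{E:BertinTh} the field is $K=\QQ(\sqrt{-10})$, with $\mathcal{O}_K=\ZZ[\sqrt{-10}]$ of discriminant $-40$ and $h(K)=2$, the two classes being represented by $m^2+10n^2$ and $2m^2+5n^2$.

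First I would construct an everywhere unramified Hecke Gr\"{o}ssencharacter $\phi$ of $K$ of infinity type $2$, normalized so that $\phi((\alpha))=\alpha^2$ for a principal ideal $(\alpha)$ (well defined since $\mathcal{O}_K^\times=\{\pm1\}$), with $\phi$ on the ramified prime $\mathfrak{p}_2$ above $2$ (which satisfies $\mathfrak{p}_2^2=(2)$) taken to be one of the two square roots $\phi(\mathfrak{p}_2)=\pm2$ of $\phi((2))=4$, and extended multiplicatively. There are exactly two such characters, differing by the nontrivial genus character on the non-principal class. By the theory of CM forms (Hecke; cf.~\cite[Thm.~1.31]{Ono}), $\Psi_\phi(\tau):=\sum_{\mathfrak{a}}\phi(\mathfrak{a})q^{N(\mathfrak{a})}$ is a newform in $S_3(\Gamma_0(|d_K|),\chi_{d_K})$, i.e.\ in $S_3(\Gamma_0(24),\chi_{-24})$, resp.\ $S_3(\Gamma_0(40),\chi_{-40})$, and $L(\Psi_\phi,s)=\sum_{\mathfrak{a}}\phi(\mathfrak{a})N(\mathfrak{a})^{-s}$ for $\Re(s)>2$. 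Since the unique ideal of norm $2$ is $\mathfrak{p}_2$, the coefficient of $q^2$ in $\Psi_\phi$ is $\phi(\mathfrak{p}_2)=\pm2$, so the two sign choices over $\QQ(\sqrt{-6})$ produce the forms with $a(2)=2$ and $a(2)=-2$, namely $g_{24}^{(1)}$ and $g_{24}^{(2)}$; over $\QQ(\sqrt{-10})$ the choice $\phi(\mathfrak{p}_2)=-2$ gives $g_{40}$. Matching a few more Fourier coefficients and invoking Sturm's theorem (\cite[Thm.~2.58]{Ono}) confirms these identifications.

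It then remains to evaluate $\sum_{\mathfrak{a}}\phi(\mathfrak{a})N(\mathfrak{a})^{-s}$ by summing separately over the two ideal classes (I write the $\QQ(\sqrt{-6})$ case; the other is identical with $-6$ replaced by $-10$). The nonzero principal ideals are parametrized two-to-one, each by $\pm\alpha$, by $(m,n)\in\ZZ^2\setminus\{(0,0)\}$ via $\alpha=m+n\sqrt{-6}$, so this part equals $\tfrac12\sideset{}{'}\sum_{m,n\in\ZZ}(m+n\sqrt{-6})^2/(m^2+6n^2)^s$; the imaginary part cancels under $n\mapsto-n$, leaving $\tfrac12\sideset{}{'}\sum_{m,n\in\ZZ}(m^2-6n^2)/(m^2+6n^2)^s$. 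For a nonzero non-principal ideal $\mathfrak{b}$, the product $\mathfrak{p}_2\mathfrak{b}$ is principal of norm $2N(\mathfrak{b})$, hence generated by some $\gamma$ whose rational part is even, say $\gamma=2m+n\sqrt{-6}$; conversely every $(2m+n\sqrt{-6})$ equals $\mathfrak{p}_2$ times a non-principal ideal, and $\gamma$ is unique up to sign, so this again gives a two-to-one parametrization by $(m,n)\in\ZZ^2\setminus\{(0,0)\}$ with $N(\mathfrak{b})=2m^2+3n^2$ and $\phi(\mathfrak{b})=(2m+n\sqrt{-6})^2/\phi(\mathfrak{p}_2)=\pm(2m^2-3n^2+2mn\sqrt{-6})$. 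Summing and discarding the imaginary part gives $\pm\tfrac12\sideset{}{'}\sum_{m,n\in\ZZ}(2m^2-3n^2)/(2m^2+3n^2)^s$; with the sign fixed as above, for $g_{24}^{(1)}$ this is exactly the second term of \eqref{E:Bertin}, and for $g_{24}^{(2)}$ the relabeling $(m,n)\mapsto(n,m)$ turns $-(2m^2-3n^2)$ into $3m^2-2n^2$, yielding \eqref{E:BertinT}; the analogous computation over $\QQ(\sqrt{-10})$ with $\phi(\mathfrak{p}_2)=-2$ and the same relabeling gives \eqref{E:BertinTh}.

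I expect the main work to lie not in any single estimate but in the bookkeeping around the non-principal class: pinning down the conductor so that the level is exactly $24$, resp.\ $40$ (this is why I take $\phi$ unramified, using that $-24$ and $-40$ are fundamental discriminants); selecting the correct square root $\phi(\mathfrak{p}_2)=\pm2$ so that $\Psi_\phi$ is the intended newform; and checking that the correspondence $(m,n)\leftrightarrow\mathfrak{b}$ above is genuinely two-to-one over all of $\ZZ^2\setminus\{(0,0)\}$, including imprimitive pairs, so that the clean sums $\sideset{}{'}\sum$ in the statement come out with no missing or repeated terms. Parts of \eqref{E:Bertin} and \eqref{E:BertinT} overlap with Bertin's study of the associated singular $K3$ surfaces in \cite{Bertin}; the genuinely new identity here is \eqref{E:BertinTh}.
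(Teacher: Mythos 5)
Your proposal is correct and follows essentially the same route as the paper: the paper likewise proves \eqref{E:Bertin}--\eqref{E:BertinTh} by attaching unramified weight-$3$ Hecke Gr\"ossencharacters with $\phi(\mathfrak{p}_2)=\pm 2$ to $\QQ(\sqrt{-6})$ and $\QQ(\sqrt{-10})$, splitting the Hecke $L$-series over the two ideal classes, and identifying the resulting $q$-expansions with $g_{24}^{(1)}$, $g_{24}^{(2)}$, $g_{40}$ via Sturm's theorem. The only difference is one of presentation: where you parametrize the non-principal class explicitly through $\mathfrak{p}_2\mathfrak{b}=(2m+n\sqrt{-d})$, the paper quotes the corresponding computation from Bertin's Théorème 4.1 for $\QQ(\sqrt{-6})$ and imitates it for $\QQ(\sqrt{-10})$.
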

\begin{proof}
We have immediately from the proof of \cite[Thm.~4.1]{BertinMain} that
$$L_{\mathbb{Q}(\sqrt{-6})}(\phi,s)=\frac{1}{2}\sideset{}{'}\sum_{m,n\in\mathbb{Z}}\left(\frac{m^2-6n^2}{(m^2+6n^2)^{s}}+\frac{3m^2-2n^2}{(3m^2+2n^2)^{s}}\right),$$ where $\phi$ is the Hecke Gr\"{o}ssencharacter given by 
\begin{align*}
\phi((m+n\sqrt{-6}))&=(m+n\sqrt{-6})^2,\\
\phi((2,\sqrt{-6}))&=-2,
\end{align*}
for any $m,n\in\mathbb{Z}.$ Considering the first terms of this Hecke $L$-series, one sees that its inverse Mellin transform is exactly $g_{24}^{(2)}(\tau)$ by Sturm's theorem.
Similarly, if we define the Hecke Gr\"{o}ssencharacter $\psi$ by 
\begin{align*}
\psi((m+n\sqrt{-6}))&=(m+n\sqrt{-6})^2,\\
\psi((2,\sqrt{-6}))&=2,
\end{align*}
then we obtain the Hecke $L$-series
$$L_{\mathbb{Q}(\sqrt{-6})}(\psi,s)=\frac{1}{2}\sideset{}{'}\sum_{m,n\in\mathbb{Z}}\left(\frac{m^2-6n^2}{(m^2+6n^2)^{s}}+\frac{2m^2-3n^2}{(2m^2+3n^2)^{s}}\right),$$
whose inverse Mellin transform is $g_{24}^{(1)}(\tau)$. Consequently, \eqref{E:Bertin} and \eqref{E:BertinT} follow.

To show \eqref{E:BertinTh} we shall imitate the proof of \cite[Thm.~4.1]{BertinMain}. 
Recall that in $\mathbb{Z}(\sqrt{-10})$ there are two classes of ideals, namely $$\mathcal{A}_0=\left\{(m+n\sqrt{-10})\mid m,n\in \mathbb{Z}\right\} \text{ and } \mathcal{A}_1=\left\{(m+n\sqrt{-10})\mathcal{P}\mid m,n\in \mathbb{Z}\right\},$$ where $\mathcal{P}=(2,\sqrt{-10}).$ Defining the Hecke character $$\phi((m+n\sqrt{-10}))=(m+n\sqrt{-10})^2 , \hspace{5mm}\phi(\mathcal{P})=-2 $$
and applying the formula
$$L_F(\phi,s)=\sum_{cl(P)}\frac{\phi(P)}{N(P)^{2-s}}\left(\frac{1}{2}\sideset{}{'}\sum_{\lambda\in P}\frac{\bar{\lambda}^2}{\left(\lambda\bar{\lambda}\right)^s}\right),$$
we have
\begin{equation*}
L_{\mathbb{Q}(\sqrt{-10})}(\phi,s)=\frac{1}{2}\sideset{}{'}\sum_{m,n\in\mathbb{Z}}\left(\frac{m^2-10n^2}{(m^2+10n^2)^{s}}+\frac{5m^2-2n^2}{(5m^2+2n^2)^{s}}\right),
\end{equation*}
and the inverse Mellin transform of this Hecke $L$-series equals $g_{40}(\tau).$ Since the conductors of the Hecke characters defined above are trivial and the discriminants of $\mathbb{Q}(\sqrt{-6})$ and $\mathbb{Q}(\sqrt{-10})$ are $-24$ and $-40$, respectively, we have that $g_N$ are newforms of weight 3 and level $N$ having CM by $\chi_{-N}$ (cf. \cite[\S 1]{Schutt}).
\end{proof}
\begin{lemma}\label{L:FinaleII}
Let $t\in\mathbb{C}$ be such that $\Re(t)>1.$ Then the following equalities hold:
\begin{align}
2\left(1-\frac{3}{2^t}+\frac{2}{2^{2t}}\right)\zeta(t)L(\chi_{-4},t)&=\sideset{}{'}\sum_{m,n\in\mathbb{Z}}\left(\frac{1}{(m^2+4n^2)^t}-\frac{1}{(2m^2+2n^2)^t}\right)\label{E:First},\\
2L(\chi_{8},t)L(\chi_{-3},t)&=\sideset{}{'}\sum_{m,n\in\mathbb{Z}}\left(\frac{1}{(m^2+6n^2)^t}-\frac{1}{(2m^2+3n^2)^t}\right)\label{E:Lattice},\\
2L(\chi_5,t)L(\chi_{-8},t)&=\sideset{}{'}\sum_{m,n\in\mathbb{Z}}\left(\frac{1}{(m^2+10n^2)^t}-\frac{1}{(2m^2+5n^2)^t}\right)\label{E:Third},\\
2L(\chi_{12},t)L(\chi_{-4},t)&=\sideset{}{'}\sum_{m,n\in\mathbb{Z}}\left(\frac{1}{(m^2+12n^2)^t}-\frac{1}{(3m^2+4n^2)^t}\right)\label{E:Zagier},\\
2L(\chi_{24},t)L(\chi_{-3},t)&=\sideset{}{'}\sum_{m,n\in\mathbb{Z}}\left(\frac{1}{(m^2+18n^2)^t}-\frac{1}{(2m^2+9n^2)^t}\right)\label{E:Fifth}.
\end{align}
\end{lemma}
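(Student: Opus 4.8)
The plan is to realize each of the five identities as an instance of the classical factorization of a genus--character Hecke $L$--series as a product of two Dirichlet $L$--series. In each case the relevant quadratic discriminant $D$ is chosen so that the class group of the corresponding imaginary quadratic order $\mathcal{O}$ has order $2$; there are then exactly two genera, one class per genus, and the two positive definite binary quadratic forms $Q_1,Q_2$ on the right of the identity are, up to a constant, the norm forms of a representative of the principal class and of the non-principal class $[\mathfrak{b}]$. Writing $\psi$ for the unique non-trivial (quadratic) character of that class group, one has, for $\Re(s)>1$, the partial--zeta decomposition
\[
L(\psi,s)=\zeta_{[\mathcal{O}]}(s)-\zeta_{[\mathfrak{b}]}(s)=\tfrac12\sideset{}{'}\sum_{m,n\in\mathbb{Z}}Q_1(m,n)^{-s}-\tfrac12\sideset{}{'}\sum_{m,n\in\mathbb{Z}}Q_2(m,n)^{-s},
\]
the Epstein zeta functions on the right being read off from $\mathbb{Z}$--bases of $\mathcal{O}$ and of a representative ideal $\mathfrak{b}$, exactly as in the proof of Lemma~\ref{L:Finale} (which uses the weight $3$ analogue). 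Since $L(\psi,s)$ is the $L$--series of a genus character it factors as a product of two Dirichlet $L$--series; matching this against the claimed right-hand side and multiplying by $2$ proves the identity.

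Concretely, for \eqref{E:Lattice} take $\mathcal{O}$ to be the maximal order of $\mathbb{Q}(\sqrt{-6})$ (discriminant $-24$, class number $2$), with $\mathfrak{b}$ the ramified prime above $2$; then $\zeta_{[\mathcal{O}]}(s)=\tfrac12\sideset{}{'}\sum(m^2+6n^2)^{-s}$, $\zeta_{[\mathfrak{b}]}(s)=\tfrac12\sideset{}{'}\sum(2m^2+3n^2)^{-s}$, and $L(\psi,s)=L(\chi_{-3},s)L(\chi_8,s)$ because the genus field $\mathbb{Q}(\sqrt{2},\sqrt{-3})$ satisfies $\zeta_{\mathbb{Q}(\sqrt{2},\sqrt{-3})}(s)=\zeta(s)L(\chi_{-24},s)L(\chi_{-3},s)L(\chi_8,s)=\zeta_{\mathbb{Q}(\sqrt{-6})}(s)\,L(\chi_{-3},s)L(\chi_8,s)$. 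Identity \eqref{E:Third} is handled identically with $\mathbb{Q}(\sqrt{-10})$ (discriminant $-40$, class number $2$), forms $m^2+10n^2$ and $2m^2+5n^2$, genus field $\mathbb{Q}(\sqrt{5},\sqrt{-2})$, giving $L(\psi,s)=L(\chi_5,s)L(\chi_{-8},s)$. For \eqref{E:Zagier} and \eqref{E:Fifth} one uses instead the non-maximal orders $\mathbb{Z}[2\sqrt{-3}]$ of conductor $4$ in $\mathbb{Q}(\sqrt{-3})$ (discriminant $-48$) and $\mathbb{Z}[3\sqrt{-2}]$ of conductor $3$ in $\mathbb{Q}(\sqrt{-2})$ (discriminant $-72$): in both the ring class number is $2$, the non-principal class is represented by $3m^2+4n^2$, resp.\ $2m^2+9n^2$, and the ring class field (which here coincides with the genus field) is $\mathbb{Q}(\sqrt{-3},i)$, resp.\ $\mathbb{Q}(\sqrt{-2},\sqrt{-3})$, so the same Dedekind--zeta comparison yields $L(\psi,s)=L(\chi_{-4},s)L(\chi_{12},s)$, resp.\ $L(\chi_{-3},s)L(\chi_{24},s)$.

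Identity \eqref{E:First} is exceptional, since the second form $2m^2+2n^2=2(m^2+n^2)$ is imprimitive, so I would instead argue directly with $\mathbb{Q}(i)$. From $\sideset{}{'}\sum(m^2+n^2)^{-t}=4\zeta(t)L(\chi_{-4},t)$ one gets $\sideset{}{'}\sum(2m^2+2n^2)^{-t}=4\cdot 2^{-t}\zeta(t)L(\chi_{-4},t)$ at once, and, separating the terms with $m$ even,
\[
\sideset{}{'}\sum_{m,n\in\mathbb{Z}}(m^2+4n^2)^{-t}=S(t)+4\cdot 4^{-t}\zeta(t)L(\chi_{-4},t),\qquad S(t):=\sum_{\substack{m,n\in\mathbb{Z}\\ m\ \mathrm{odd}}}(m^2+4n^2)^{-t}.
\]
For an odd integer $k$, exactly half of the $4\sum_{d\mid k}\chi_{-4}(d)$ representations $k=a^2+b^2$ have $b$ even, so $S(t)=2(1-2^{-t})\zeta(t)L(\chi_{-4},t)$; combining the three expressions gives
\[
\sideset{}{'}\sum(m^2+4n^2)^{-t}-\sideset{}{'}\sum(2m^2+2n^2)^{-t}=\bigl(2-6\cdot 2^{-t}+4\cdot 4^{-t}\bigr)\zeta(t)L(\chi_{-4},t),
\]
which is \eqref{E:First}.

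The step I expect to demand the most care is the genus--character factorization for the non-maximal orders in \eqref{E:Zagier} and \eqref{E:Fifth}: one must check that no spurious Euler factor at the primes dividing the conductor (the prime of $\mathbb{Q}(\sqrt{-3})$ above $2$, resp.\ the primes of $\mathbb{Q}(\sqrt{-2})$ above $3$) enters the identity $L(\psi,s)=L(\chi_{D_1},s)L(\chi_{D_2},s)$. This is controlled by the fact that those primes are (totally) ramified in the quadratic extension of $\mathbb{Q}(\sqrt{-3})$, resp.\ $\mathbb{Q}(\sqrt{-2})$, cut out by $\psi$, so $\psi$ is ramified there and the corresponding Euler factors equal $1$ on both sides; alternatively, one may verify the identity by comparing Euler factors prime by prime.
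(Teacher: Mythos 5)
Your argument is correct, but it reaches the lemma by a genuinely different route than the paper. The paper's proof imports the closed forms for the principal-form lattice sums $S(1,0,N;t)$ from Glasser and Zucker, then treats the non-principal form by elementary manipulations of representation numbers (the relations $R_{Q_2}(2l)=R_{Q_1}(l)$, $R_{Q_2}(3l)=R_{Q_1}(l)$, $R_{Q_2}(6l)=R_{Q_2}(l)$) combined with Dirichlet's representation formula on integers coprime to the discriminant, working out \eqref{E:Lattice} in detail and asserting the rest are similar. You instead recognize the difference of the two Epstein zeta functions as (twice) the $L$-series of the genus character of the relevant class group or ring class group, and get the factorization into two Dirichlet $L$-series by comparing Dedekind zeta functions of the genus field, with a separate direct computation for \eqref{E:First}, where the second form is imprimitive and the genus-theoretic framing does not apply verbatim. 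Your route is more structural and dispenses with the Glasser--Zucker table; its one delicate point, which you correctly flag, is identifying the lattice-point difference with the primitive Hecke $L$-function $\zeta_M(s)/\zeta_K(s)$ at the primes dividing the conductor of the non-maximal orders in \eqref{E:Zagier} and \eqref{E:Fifth}. That check does go through: for discriminant $-48$ both forms represent only multiples of $4$ among even integers and $R_{x^2+12y^2}(4l)=R_{3x^2+4y^2}(4l)=R_{x^2+3y^2}(l)$, so the difference vanishes on even arguments, matching the trivial Euler factor of $L(\chi_{-4},s)L(\chi_{12},s)$ at $2$; the analogous statement holds at $3$ for discriminant $-72$. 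Your elementary treatment of \eqref{E:First} also reproduces the paper's cited value $S(1,0,4;t)=2\left(1-2^{-t}+2^{1-2t}\right)\zeta(t)L(\chi_{-4},t)$. Both proofs ultimately rest on Dirichlet's genus theory; the paper's is more computational and leans on the cited lattice-sum evaluations, while yours explains conceptually why a product of exactly two Dirichlet $L$-series appears.
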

\begin{proof}
First, recall from \cite[\S IV]{Lattice} that if we set
$$S(a,b,c;t):=\sideset{}{'}\sum_{m,n\in\mathbb{Z}}\frac{1}{\left(am^2+bmn+cn^2\right)^t},$$then the following equalities hold:
\begin{align*}
S(1,0,1;t)&=4\zeta(t)L(\chi_{-4},t),\\
S(1,0,4;t)&=2\left(1-2^{-t}+2^{1-2t}\right)\zeta(t)L(\chi_{-4},t),\\
S(1,0,6;t)&=\zeta(t)L(\chi_{-24},t)+L(\chi_8,t)L(\chi_{-3},t),\\
S(1,0,10;t)&=\zeta(t)L(\chi_{-40},t)+L(\chi_5,t)L(\chi_{-8},t),\\
S(1,0,12;t)&=\left(1+2^{-2t}+2^{2-4t}\right)\zeta(t)L(\chi_{-3},t)+L(\chi_{12},t)L(\chi_{-4},t),\\
S(1,0,18;t)&=\left(1-2\cdot 3^{-t}+3^{1-2t}\right)\zeta(t)L(\chi_{-8},t)+L(\chi_{24},t)L(\chi_{-3},t).
\end{align*}
We will exhibit how to prove \eqref{E:Lattice} only, since the other identities can be shown similarly. Let $Q_1$ and $Q_2$ be the quadratic forms of discriminant $-24$ given by $$Q_1(m,n)=m^2+6n^2,\,\, Q_2(m,n)=2m^2+3n^2,$$ and for each $j\in \{1,2\}$ and $k\in \mathbb{N}$ let 
$$R_{Q_j}(k)=\#\left\{(m,n)\in\mathbb{Z}^2 \mid Q_j(m,n)=k \right\}.$$ By the formulas above, we see that 
\begin{equation} \label{E:Zucker}
\sum_{k=1}^{\infty}\frac{R_{Q_1}(k)}{k^t}=\zeta(t)L(\chi_{-24},t)+L(\chi_8,t)L(\chi_{-3},t).
\end{equation}
Notice that, for any given $l\in\mathbb{N}$, $2m^2+3n^2=2l$ is equivalent to $m^2+6b^2=l$, where $n=2b$. This implies that $R_{Q_2}(2l)=R_{Q_1}(l)$. Similarly, it can be checked that $R_{Q_2}(3l)=R_{Q_1}(l)$ and $R_{Q_2}(6l)=R_{Q_2}(l)$.
As a result, we have
\begin{equation}\label{E:Finale}
\begin{aligned}
\sideset{}{'}\sum_{m,n\in\mathbb{Z}}\frac{1}{(2m^2+3n^2)^t}&=\sum_{k=1}^{\infty}\frac{R_{Q_2}(k)}{k^t}\\
&=\sum_{\substack{k=1 \\ (k,6)=1}}^{\infty}\frac{R_{Q_2}(k)}{k^t}+\sum_{\substack{k=1 \\ 2|k}}^{\infty}\frac{R_{Q_2}(k)}{k^t}+\sum_{\substack{k=1 \\ 3|k}}^{\infty}\frac{R_{Q_2}(k)}{k^t}-\sum_{\substack{k=1 \\ 6|k}}^{\infty}\frac{R_{Q_2}(k)}{k^t}\\
&=\sum_{\substack{k=1 \\ (k,6)=1}}^{\infty}\frac{R_{Q_2}(k)}{k^t}+\left(\frac{1}{2^t}+\frac{1}{3^t}\right)\sum_{k=1}^{\infty}\frac{R_{Q_1}(k)}{k^t}-\frac{1}{6^t}\sum_{k=1}^{\infty}\frac{R_{Q_2}(k)}{k^t}.
\end{aligned}
\end{equation}
If $(k,6)=1$, then  
\begin{equation*}
k \equiv \begin{cases}
                 1  \,\,\,\,\,\pmod 3         &\text{if } k=Q_1(m,n),\\
                 -1 \pmod 3        &\text{if } k=Q_2(m,n).
              \end{cases}\
\end{equation*}
Hence we find from the well-known formula due to Dirichlet \cite[p.~229]{Dirichlet} that $$R_{Q_2}(k)=\left(1-\chi_{-3}(k)\right)\sum_{l|k}\chi_{-24}(l)=\sum_{l|k}\chi_{-24}(l)-\sum_{l|k}\chi_{-3}\left(\frac{k}{l}\right)\chi_{8}(l).$$
It follows that 
\begin{equation}\label{E:LatticeR}
\begin{aligned}
\sum_{\substack{k=1 \\ (k,6)=1}}^{\infty}\frac{R_{Q_2}(k)}{k^t}&=\sum_{\substack{k=1 \\ (k,6)=1}}^{\infty}\frac{\left(\textbf{1}\ast\chi_{-24}\right)(k)}{k^t}-\sum_{\substack{k=1 \\ (k,6)=1}}^{\infty}\frac{\left(\chi_{-3}\ast\chi_8\right)(k)}{k^t}\\
&=\left(1-\frac{1}{2^t}\right)\left(1-\frac{1}{3^t}\right)\zeta(t)L(\chi_{-24},t)\\
&\qquad\qquad -\left(1+\frac{1}{2^t}\right)\left(1+\frac{1}{3^t}\right)L(\chi_{-3},t)L(\chi_8,t),
\end{aligned}
\end{equation}
where $\ast$ denotes the Dirichlet convolution. Then \eqref{E:Lattice} can be derived easily using \eqref{E:Zucker}, \eqref{E:Finale}, and \eqref{E:LatticeR}.
\end{proof}

We are now in a good position to prove our main theorem.
\begin{proof}[Proof of Theorem~\ref{T:Main}]
Applying Lemma~\ref{L:s2}, Proposition~\ref{P:general}(i) for $\tau\in\{\frac{\sqrt{-1}}{2},\frac{\sqrt{-3}}{2} \}$, Lemma~\ref{L:CM},  Lemma~\ref{L:Eta}, and Lemma~\ref{L:FinaleII}, we have immediately that 
\begin{align*}
f_2(64)&=\frac{1}{\pi^3}\sideset{}{'}\sum_{m,n\in\mathbb{Z}}\biggl(-\left(\frac{256n^2}{(m^2+4n^2)^3}-\frac{16}{(m^2+4n^2)^2}\right)+16\left(\frac{4n^2}{(4m^2+n^2)^3}-\frac{1}{(4m^2+n^2)^2}\right)\biggr)\\
&=\frac{128}{\pi^3}\left(\frac{1}{2}\sideset{}{'}\sum_{m,n\in\mathbb{Z}}\frac{m^2-4n^2}{(m^2+4n^2)^3}\right)\\&=\frac{128}{\pi^3}L(h,3),\\
f_2(256)&=\frac{\sqrt{3}}{\pi^3}\sideset{}{'}\sum_{m,n\in\mathbb{Z}}\biggl(-\left(\frac{256n^2}{(3m^2+4n^2)^3}-\frac{16}{(3m^2+4n^2)^2}\right)\\&\qquad\qquad+16\left(\frac{4n^2}{(12m^2+n^2)^3}-\frac{1}{(12m^2+n^2)^2}\right)\biggr)\\
&=\frac{64\sqrt{3}}{\pi^3}\left(\frac{1}{2}\sideset{}{'}\sum_{m,n\in\mathbb{Z}}\left(\frac{m^2-12n^2}{(m^2+12n^2)^3}+\frac{3m^2-4n^2}{(3m^2+4n^2)^3}\right)\right)\\&\qquad\qquad+\frac{16\sqrt{3}}{\pi^3}\sideset{}{'}\sum_{m,n\in\mathbb{Z}}\left(\frac{1}{(m^2+12n^2)^2}-\frac{1}{(3m^2+4n^2)^2}\right)\\
&=\frac{64\sqrt{3}}{\pi^3}L(g_{48},3)+\frac{16}{3\pi}L(\chi_{-4},2),
\end{align*}
where we have used the fact that $L(\chi_{12},2)=\frac{\pi^2}{6\sqrt{3}}$ to get the last equality.\\

Similarly, using Proposition~\ref{P:general} and the lemmas in this section properly, we get
\begin{align*}
f_3(216)&=\frac{45\sqrt{6}}{\pi^3}\left(\frac{1}{2}\sideset{}{'}\sum_{m,n\in\mathbb{Z}}\left(\frac{m^2-6n^2}{(m^2+6n^2)^3}+\frac{2m^2-3n^2}{(2m^2+3n^2)^3}\right)\right)\\&\qquad\qquad+\frac{45\sqrt{6}}{4\pi^3}\sideset{}{'}\sum_{m,n\in\mathbb{Z}}\left(\frac{1}{(m^2+6n^2)^2}-\frac{1}{(2m^2+3n^2)^2}\right)\\
&=\frac{45\sqrt{6}}{\pi^3}L(g_{24}^{(1)},3)+\frac{45\sqrt{3}}{16\pi}L(\chi_{-3},2),\\
f_3(1458)&=\frac{90\sqrt{3}}{\pi^3}\left(\frac{1}{2}\sideset{}{'}\sum_{m,n\in\mathbb{Z}}\left(\frac{m^2-12n^2}{(m^2+12n^2)^3}+\frac{4m^2-3n^2}{(4m^2+3n^2)^3}\right)\right)\\&\qquad\qquad+\frac{45\sqrt{3}}{2\pi^3}\sideset{}{'}\sum_{m,n\in\mathbb{Z}}\left(\frac{1}{(m^2+12n^2)^2}-\frac{1}{(4m^2+3n^2)^2}\right)\\
&=\frac{810\sqrt{3}}{8\pi^3}L(g,3)+\frac{15}{2\pi}L(\chi_{-4},2),\\
f_4(648)&=\frac{160}{\pi^3}\left(\frac{1}{2}\sideset{}{'}\sum_{m,n\in\mathbb{Z}}\frac{m^2-4n^2}{(m^2+4n^2)^3}\right)+\frac{40}{\pi^3}\sideset{}{'}\sum_{m,n\in\mathbb{Z}}\left(\frac{1}{(m^2+4n^2)^2}-\frac{1}{(2m^2+2n^2)^2}\right)\\
&=\frac{160}{\pi^3}L(h,3)+\frac{5}{\pi}L(\chi_{-4},2),\\
f_4(2304)&=\frac{80\sqrt{6}}{\pi^3}\left(\frac{1}{2}\sideset{}{'}\sum_{m,n\in\mathbb{Z}}\left(\frac{m^2-6n^2}{(m^2+6n^2)^3}+\frac{3m^2-2n^2}{(3m^2+2n^2)^3}\right)\right)\\&\qquad\qquad+\frac{20\sqrt{6}}{\pi^3}\sideset{}{'}\sum_{m,n\in\mathbb{Z}}\left(\frac{1}{(m^2+6n^2)^2}-\frac{1}{(2m^2+3n^2)^2}\right)\\
&=\frac{80\sqrt{6}}{\pi^3}L(g_{24}^{(2)},3)+\frac{5\sqrt{3}}{\pi}L(\chi_{-3},2),\\
f_4(20736)&=\frac{80\sqrt{10}}{\pi^3}\left(\frac{1}{2}\sideset{}{'}\sum_{m,n\in\mathbb{Z}}\left(\frac{m^2-10n^2}{(m^2+10n^2)^3}+\frac{5m^2-2n^2}{(5m^2+2n^2)^3}\right)\right)\\&\qquad\qquad+\frac{20\sqrt{10}}{\pi^3}\sideset{}{'}\sum_{m,n\in\mathbb{Z}}\left(\frac{1}{(m^2+10n^2)^2}-\frac{1}{(5m^2+2n^2)^2}\right)\\
&=\frac{80\sqrt{10}}{\pi^3}L(g_{40},3)+\frac{32\sqrt{2}}{5\pi}L(\chi_{-8},2),\\
f_4(614656)&=\frac{800\sqrt{2}}{3\pi^3}\left(\frac{1}{2}\sideset{}{'}\sum_{m,n\in\mathbb{Z}}\frac{m^2-2n^2}{(m^2+2n^2)^3}\right)+\frac{60\sqrt{2}}{\pi^3}\sideset{}{'}\sum_{m,n\in\mathbb{Z}}\left(\frac{1}{(m^2+18n^2)^2}-\frac{1}{(2m^2+9n^2)^2}\right)\\
&=\frac{800\sqrt{2}}{3\pi^3}L(f,3)+\frac{10\sqrt{3}}{\pi}L(\chi_{-3},2),
\end{align*}
since $\zeta(2)=\frac{\pi^2}{6},L(\chi_8,2)=\frac{\pi^2}{8\sqrt{2}},L(\chi_5,2)=\frac{4\pi^2}{25\sqrt{5}},$ and $L(\chi_{24},2)=\frac{\pi^2}{4\sqrt{6}}.$
The latter equalities in \eqref{A:64}-\eqref{A:614656} can be deduced using the following functional equations:
\begin{equation}\label{E:FE}
\left(\frac{\sqrt{N}}{2\pi}\right)^s\Gamma(s)L(f,s)=\pm\left(\frac{\sqrt{N}}{2\pi}\right)^{3-s}\Gamma(3-s)L(f,3-s),
\end{equation}
$$\left(\frac{\pi}{k}\right)^{-\frac{2-s}{2}}\Gamma\left(\frac{2-s}{2}\right)L(\chi_{-k},1-s)=\left(\frac{\pi}{k}\right)^{-\frac{s+1}{2}}\Gamma\left(\frac{s+1}{2}\right)L(\chi_{-k},s),$$
where $f$ is a newform in $S_3(\Gamma_0(N),\chi)$ with real Fourier coefficients and $\chi$ is a real character. (When $f$ is any of the newforms given in Theorem~\ref{T:Main}, the sign of the functional equation \eqref{E:FE} is `$+$' instead of `$\pm$' by numerical approximation.)
\end{proof}

\section{Mahler measures of other Laurent polynomials} \label{sec:other}
In this section, we show that some other interesting formulas can be deduced easily from the results in Theorem~\ref{T:Main}. More precisely, let us consider the Mahler measures of a family of Laurent polynomials 
$$Q_k:=x+\frac{1}{x}+y+\frac{1}{y}+z+\frac{1}{z}+xy+\frac{1}{xy}+yz+\frac{1}{yz}+xz+\frac{1}{xz}+xyz+\frac{1}{xyz}-k,$$ where $k\in\mathbb{C}$, studied by Bertin in \cite{Bertin} and \cite{BertinMain}. Her results include the following formulas:
\begin{align}
2m(Q_{-36})&=4m(Q_{-6})+m(Q_0),\label{E:B1}\\
m(Q_0)&=\frac{12\sqrt{3}}{\pi^3}L(g,3)=2L'(g,0),\label{E:B2}\\
m(Q_{12})&=4m(Q_{0})=8L'(g,0).\label{E:B3}
\end{align}
On the other hand, Rogers \cite[Thm.~2.5]{RogersMain} proved that 
\begin{theorem}\label{T:RogersII}
For $|z|$ sufficiently large, 
\begin{equation*}
m(Q_{z-4})=-\frac{1}{15}f_3\left(\frac{(16-z)^3}{z^2}\right)+\frac{8}{15}f_3\left(-\frac{(4-z)^3}{z}\right).
\end{equation*}
\end{theorem}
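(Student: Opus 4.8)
The plan is to trade the transcendental identity of Mahler measures for an elementary identity of power series in $1/z$, and then to establish the latter by a cubic hypergeometric transformation (or, in the spirit of this paper, by a $q$-series computation). First I would reduce to power series. Recall the standard expansion $m(P_0-k)=\log|k|-\Re\sum_{n\ge1}\tfrac{[P_0^n]_0}{n k^n}$, valid whenever $P_0$ is a Laurent polynomial in $r$ variables and $|k|>\max_{\TT^r}|P_0|$, where $[P_0^n]_0$ denotes the constant term of $P_0^n$. Apply this to $Q_{z-4}=A-(z-4)$, where $A$ is the $14$-term Laurent polynomial with $Q_k=A-k$ (so $A=\tfrac{(1+x)(1+y)(1+z)(1+xyz)}{xyz}-2$ and $\max_{\TT^3}|A|=14$), and to $f_3(k)=m(B-k)$ with $B=(x+x^{-1})^2(y+y^{-1})^2(1+z)^3z^{-2}$, where $\max_{\TT^3}|B|=128$ and, factoring over the three variables, $[B^n]_0=\binom{2n}{n}^2\binom{3n}{n}$. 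For $z$ real and sufficiently large the three parameters $z-4$, $\tfrac{(16-z)^3}{z^2}$ and $-\tfrac{(4-z)^3}{z}=\tfrac{(z-4)^3}{z}$ are all real and of absolute value exceeding the relevant maxima, so all three expansions converge and exhibit both sides of the theorem as real-analytic functions; it then suffices to prove the identity for such $z$ and extend by analyticity. Writing $H(t):=\sum_{n\ge1}\tfrac1n\binom{2n}{n}^2\binom{3n}{n}t^n$, which is essentially the ${}_5F_4$ of Proposition~\ref{P:Rogers}(ii) and satisfies $t\tfrac{d}{dt}H(t)={}_3F_2(\tfrac13,\tfrac12,\tfrac23;1,1;108t)-1$, and $F(t):=\sum_{n\ge1}\tfrac{[A^n]_0}{n}t^n$, and collecting the logarithmic terms, the theorem becomes the identity of convergent power series in $1/z$
$$F\!\left(\frac1{z-4}\right)=\frac15\log\frac{(z-16)z^2}{(z-4)^3}-\frac1{15}H\!\left(\frac{z^2}{(16-z)^3}\right)+\frac8{15}H\!\left(\frac{z}{(z-4)^3}\right).$$

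To prove the displayed identity I would differentiate in $z$. The logarithm becomes the rational function $\tfrac15(\tfrac2z+\tfrac1{z-16}-\tfrac3{z-4})$; $F(1/(z-4))$ becomes a rational multiple of $\Phi_A(\tfrac1{z-4})-1$, where $\Phi_A(t)=\sum_{n\ge0}[A^n]_0t^n$ is the order-three Picard--Fuchs function of the $K3$ pencil $Q_k=0$ studied by Bertin; and $H$ becomes a rational multiple of $\Phi_B(t)-1$, where $\Phi_B(t)={}_3F_2(\tfrac13,\tfrac12,\tfrac23;1,1;108t)$, which by Clausen's formula equals ${}_2F_1(\tfrac16,\tfrac13;1;108t)^2$. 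What remains is an algebraic identity relating $\Phi_A$ at $\tfrac1{z-4}$ to $\Phi_B$ at the cubic arguments $\tfrac{z^2}{(16-z)^3}$ and $\tfrac{z}{(z-4)^3}$ --- a cubic transformation, its signature being exactly the degree-three substitutions. I would prove it by showing both sides are annihilated by the same third-order linear ODE in $z$: pull the Picard--Fuchs operator of $\Phi_B$ back along the two cubic maps and identify it, up to a common algebraic factor, with the Picard--Fuchs operator of the $Q_k$-pencil --- a finite computation against the explicit recurrence satisfied by $[A^n]_0$ --- then match finitely many coefficients of the $1/z$-expansions. Re-integrating recovers the displayed identity, the integration constant being fixed by the $z\to\infty$ limit.

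An alternative, closer to the methods of this paper, uses that both $m(Q_{z-4})$ and $f_3$ are linear combinations of $G(q)=\Re(-\log q+240\sum_{n\ge1}n^2\log(1-q^n))$ evaluated at powers of $q$: for $f_3$ this is the identity $f_3(s_3(q))=-\tfrac18G(q)+\tfrac38G(q^3)$ used in the proof of Proposition~\ref{P:general}, and Bertin's work supplies the analogous parametrization $m(Q_{z(q)-4})=\sum_ic_iG(q^{a_i})$ for a suitable hauptmodul $z(q)$. One then checks that under this parametrization $\tfrac{(16-z(q))^3}{z(q)^2}$ and $-\tfrac{(4-z(q))^3}{z(q)}$ are equal to $s_3$ evaluated at $q^j$ (or at an Atkin--Lehner image of $q$), reducing the theorem to elementary manipulation of the $G$-relations --- of the same type as $G(-q)=9G(q^2)-4G(q^4)-G(q)$ used in the paper --- which also explains the coefficients $-\tfrac1{15},\tfrac8{15}$. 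On either route the main obstacle is this cubic transformation: establishing that the Picard--Fuchs operator of the $Q_k$-pencil is the pullback, up to an algebraic factor, of the one for ${}_3F_2(\tfrac13,\tfrac12,\tfrac23;1,1;\cdot)$ under both cubic maps, or equivalently identifying the cubic substitutions with the correct $q\mapsto q^j$ / Atkin--Lehner relations and matching the resulting $G$-combinations. The remaining ingredients --- the constant-term expansions, the evaluation $[B^n]_0=\binom{2n}{n}^2\binom{3n}{n}$, and the bookkeeping of logarithm branches and rational constants --- are routine.
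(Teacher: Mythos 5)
The paper offers no proof of this statement to compare against: Theorem~\ref{T:RogersII} is quoted verbatim from Rogers \cite{RogersMain} (his Theorem~2.5), and the present paper only applies it. Judged on its own terms, your outline is a reasonable reconstruction, and your second route --- parametrize both sides by $G(q)$, using $f_3(s_3(q))=-\tfrac18G(q)+\tfrac38G(q^3)$ together with Bertin's modular formula for $m(Q_{t(q)})$, and identify the two cubic arguments with $s_3$ evaluated at powers or Atkin--Lehner images of $q$ --- is essentially how Rogers actually argues. The elementary reductions you perform are correct: $[B^n]_0=\binom{2n}{n}^2\binom{3n}{n}$, the identity $Q_0+2=(1+x)(1+y)(1+z)(1+xyz)/(xyz)$, and the bookkeeping of logarithms (the coefficients $-\tfrac15$, $-\tfrac25$, $\tfrac85$ of $\log|z-16|$, $\log|z|$, $\log|z-4|$ do recombine to $\log|z-4|$ plus a term vanishing as $z\to\infty$, as required).

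What you have written, however, is a strategy rather than a proof, and both of your routes funnel into one and the same unexecuted step, which is precisely the substance of the theorem: the degree-three transformation relating Bertin's $K3$ period $\Phi_A$ at $1/(z-4)$ to ${}_3F_2\bigl(\tfrac13,\tfrac12,\tfrac23;1,1;\cdot\bigr)$ at $\tfrac{z^2}{(16-z)^3}$ and $\tfrac{z}{(z-4)^3}$, or equivalently the identities $\tfrac{(16-z(q))^3}{z(q)^2}=s_3(q^{a})$ and $\tfrac{(z(q)-4)^3}{z(q)}=s_3(q^{b})$ for the relevant hauptmodul $z(q)$. You defer this to ``a finite computation'' without exhibiting the Picard--Fuchs operator of the $Q_k$-pencil (the recurrence for $[A^n]_0$), the hauptmodul $z(q)$ and the exponents $a,b$, or the $G$-relations that would force the coefficients $-\tfrac1{15}$ and $\tfrac8{15}$; one would also need to check the singularities of the pulled-back operators so that matching finitely many coefficients of the $1/z$-expansions actually suffices. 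Until one of those computations is carried out, the identity is not established.
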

Plugging in the value $z=16$ in the theorem above and applying \eqref{E:B3} yield 
\begin{equation}\label{E:R1}
f_3(108)=15L'(g,0).
\end{equation} 
Alternatively, one can derive this formula directly using similar arguments in the proof of Theorem~\ref{T:Main}. We employ Theorem~\ref{T:RogersII} again when $z=-32$ and use \eqref{A:1458}, \eqref{E:B1}, \eqref{E:B2}, and \eqref{E:R1} to obtain
\begin{corollary}\label{C:P1}
Let $Q_k$ be the Laurent polynomial defined above. Then the following equalities hold:
\begin{align*}
m(Q_{-36})&=2(4L'(g,0)+L'(\chi_{-4},-1)),\\
m(Q_{-6})&=\frac{1}{2}(7L'(g,0)+2L'(\chi_{-4},-1)).
\end{align*}
\end{corollary}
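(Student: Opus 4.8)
The plan is simply to assemble pieces that are already in hand. First I would specialize Theorem~\ref{T:RogersII} at $z=-32$. Since $z-4=-36$, $\frac{(16-z)^3}{z^2}=\frac{48^3}{32^2}=108$, and $-\frac{(4-z)^3}{z}=-\frac{36^3}{-32}=1458$, the theorem gives
\[
m(Q_{-36})=-\frac{1}{15}f_3(108)+\frac{8}{15}f_3(1458).
\]
Next I would substitute the two evaluations already established: $f_3(108)=15L'(g,0)$ from \eqref{E:R1}, and $f_3(1458)=\frac{15}{8}\bigl(9L'(g,0)+2L'(\chi_{-4},-1)\bigr)$ from \eqref{A:1458}. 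A one-line simplification then gives $m(Q_{-36})=-L'(g,0)+\bigl(9L'(g,0)+2L'(\chi_{-4},-1)\bigr)=2\bigl(4L'(g,0)+L'(\chi_{-4},-1)\bigr)$, which is the first asserted identity.

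For the second identity I would invoke Bertin's relation \eqref{E:B1}, namely $2m(Q_{-36})=4m(Q_{-6})+m(Q_0)$, together with \eqref{E:B2}, which says $m(Q_0)=2L'(g,0)$. Solving for $m(Q_{-6})$ yields $m(Q_{-6})=\tfrac14\bigl(2m(Q_{-36})-m(Q_0)\bigr)=\tfrac14\bigl(16L'(g,0)+4L'(\chi_{-4},-1)-2L'(g,0)\bigr)=\tfrac12\bigl(7L'(g,0)+2L'(\chi_{-4},-1)\bigr)$, as claimed.

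The only genuinely delicate point is the legitimacy of evaluating Theorem~\ref{T:RogersII} at $z=-32$: that theorem is stated only ``for $|z|$ sufficiently large,'' so one must verify that $|z|=32$ lies in its range of validity---equivalently, that both sides of the asserted identity extend real-analytically along a path joining the large-$|z|$ regime to $z=-32$ without crossing a singularity, so that the identity persists by analytic continuation. This is the same point that underlies \eqref{E:R1} (obtained from $z=16$), and, as the remark following \eqref{E:R1} indicates, it can be bypassed altogether: $m(Q_{-36})$ may instead be computed directly by the Eisenstein--Kronecker method of Section~\ref{sec:main}, writing $f_3$ at the relevant CM points as lattice sums and matching them against $L(g,3)$ and $L(\chi_{-4},2)$ exactly as in the proof of Theorem~\ref{T:Main}. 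Beyond this, the argument is just elementary arithmetic.
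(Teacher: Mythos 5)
Your proposal is correct and is essentially identical to the paper's own (very brief) argument: specialize Theorem~\ref{T:RogersII} at $z=-32$, substitute \eqref{E:R1} and \eqref{A:1458} to get $m(Q_{-36})$, then solve \eqref{E:B1} with \eqref{E:B2} for $m(Q_{-6})$. Your added remark about justifying the use of $z=-32$ in the ``$|z|$ sufficiently large'' regime is a reasonable point of care that the paper passes over silently.
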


\section{Conclusion}\label{Sec:Conclusion}
Besides the results above, we have found many other conjectured formulas by computing numerically to high accuracy using \texttt{Maple}; e.g., 
\begin{align*}
f_2(-64)&\stackrel{?}=2(L'(g_{32},0)+L'(\chi_{-4},-1)),\\
f_2(-512)&\stackrel{?}=L'(g_{64},0)+L'(\chi_{-8},-1),\\
f_4(-1024)&\stackrel{?}=\frac{8}{5}(5L'(g_{20},0)+2L'(\chi_{-4},-1)),\\
f_4(-12288)&\stackrel{?}=\frac{40}{9}(L'(g_{36},0)+2L'(\chi_{-3},-1)),\\
f_4(-82944)&\stackrel{?}=\frac{40}{13}(L'(g_{52},0)+2L'(\chi_{-4},-1)),
\end{align*}
where $g_N$ is a newform in $S_3(\Gamma_1(N))$ with rational coefficients. (Here $\stackrel{?}=$ means that they are equal to at least 70 decimal places.) As a short-term project, it
might be of interest to rigously prove these conjectured formulas though one might need more time to investigate why the formulas of this type make sense.\\

\noindent\textbf{Acknowledgements}\\

The author would like to thank Matthew Papanikolas for pointing out the numerical evidence of the first formula in Corollary~\ref{C:Hyper}, which chiefly inspires the author to write this paper, and many helpful discussions. The author is also grateful to Mathew Rogers for useful advice and suggestions. Finally, the author thanks Bruce Berndt for directing him to reference \cite{BM}.
\bibliographystyle{amsplain}

\end{document}